\newtheorem{theorem}{Theorem}[section]
\newtheorem{proposition}[theorem]{Proposition}
\newtheorem{lemma}[theorem]{Lemma}
\newtheorem{proof}{\textmd{\textit{Proof.}}}
\newtheorem{remark}[theorem]{Remark}
\newcommand{\qedd}{\hfill \Box}
\title{Necessary and sufficient conditions for two dimensional $(\alpha,\beta)$-metrics with reversible geodesics.\footnote{
Mathematics Subject Classification (2010)\,:\,53B40, 53C22.}
\footnote{
Keywords: geodesics, moving frames, isothermal coordinates}
}
\author{Ioana M. Masca, Sorin V. Sabau and Hideo Shimada}
\date{}
\begin{document}


\maketitle

\begin{abstract}
We study the necessary and sufficient conditions for a Finsler surface with $(\alpha,\beta)$-metrics to be with reversible geodesics.
\end{abstract}

\section{Introduction}

The study of Finsler metrics with reversible geodesics is an interesting topic in Finsler geometry. 
Due their computational advantages, we consider this problem only for Finsler spaces with $(\alpha, \beta)$-metrics, the more general cases remaining to be studied in future.

The Randers case was studied by M. Crampin in \cite{Cr} and the more general case of $(\alpha, \beta)$ - metrics by the authors in \cite{MSS}. However, the approach used it is not
suitable for the study of the 2-dimensional case.

In the present paper we give the necessary and sufficient conditions for a 2-dimensional Finsler space with $(\alpha, \beta)$-metrics to be with reversible geodesics using a completely different
approach than the one used in \cite{MSS}. 


\section{Finsler surfaces}

Let us recall that a Finsler surface is a pair $(M, F)$, where $M$ is a real smooth 2-dimensional manifold and $F: TM \longrightarrow \lbrack 0, \infty)$ a Finsler norm, i.e. a 
positive, smooth function on $\widetilde{TM} = TM\setminus \{0\}$, with the homogeneity proprety $F(x, \lambda y) = \lambda\cdot F(x, y)$, for all $\lambda > 0$ and all 
$(x, y)\in \widetilde{TM}$ and whose Hessian matrix 
\begin{eqnarray*}
g_{ij} = \frac{1}{2}\frac{\partial^2F^2}{\partial y^i\partial y^j}
\end{eqnarray*}
is positive definite at each point $u = (x, y) \in \widetilde{TM}$.

Equivalently, a Finsler structure on the surface $M$ can be regarded as a smooth hypersurface $\Sigma^3 \subset TM$ for which the canonical projection 
$\pi: \Sigma \longrightarrow TM$ is a surjective submersion having the property that for each $x \in M$, the $\pi$-fiber $\Sigma_x = \pi^{-1}(x)$ is a smooth, closed, strongly
convex curve in $T_xM$ enclosing the origin.

Here, strongly convex means that $\Sigma_x$ is strictly convex and it has contact of precisely order two with its tangent line in each point.
Traditionally, the curve $\Sigma_x \subset TM$ is called {\it the indicatrix} of the Finsler structure $F$ and it has the property that it is not centrally symmetric about the
origin of $T_xM$. If it is, then the Finsler structure $(M, F)$ is called {\it absolutely homogeneous}, in other words, $F(x, y) = F(x, -y)$, for all $(x, y)\in TM$.

The simplest case of Finsler surface is a Riemannian surface and in this case its indicatrix is a centrally symmetric circle on an ellipse in $T_xM$, as well known.

We are going to construct a canonical moving frame on $\Sigma$ ( see for example \cite{Br1997}, \cite{Br2002}).

Let $\Sigma_1$ be the unit tangent bundle of a Riemannian metric, say $a$, on $M$ (it is customary to denote $\alpha:=\sqrt{a(y,y)}$, where $(x,y)\in TM$).  For any Finsler structure $\Sigma$ on $M$, there exists a smooth, positive function 
$r: \Sigma_1 \longrightarrow {\bf R}^+$
such that 
$$\Sigma_r = \bigg\{\frac{1}{r(u)}\cdot u : u\in\Sigma_1 \bigg\}.$$
This notations will be used throughly.

In order to assure the strong convexity on $\Sigma$, an additional differential condition on $r$ 
must be given. Conversely, any positive function $p: \Sigma_1 \longrightarrow {\bf R}^+$
satisfying a certain differential condition defines a Finsler structure on the surface $M$ in this way. In other words, one can say that a Finsler structures on a surface $M$ dependes
on a function of 3 variables, namely the function $r$ on $\Sigma_1$. Obviously $\Sigma_p$ is in fact the indicatrix bundle of $(M, F)$ and the curve $\Sigma_p\big\vert_x = 
\Big\{\frac{1}{p(u)}\cdot u : u\in\Sigma_1\big\vert_x\Big\} \equiv \{y\in T_xM : F(x, y) = 1\}$ corresponds to the indicatrix curve described above.

The function $$\rho: \Sigma_p \longrightarrow \Sigma_1, \quad \rho\bigg(\frac{1}{p(u)}\cdot u\bigg) = u, \quad \forall u\in\Sigma_1$$
is the "inverse" function which takes the Finsler structure $(M, F)$ back to the original Riemannian structure $(M, a)$.

The functions $F$ and $p$ are essentially the same, namely, if one parametrizes the Riemannian indicatrix $\Sigma_1\big\vert_x$ by the usual Euclidean angle $t$, then
$$p(x^1, x^2, t) = F(x^1, x^2, \cos t, \sin t),$$ where $\big(y^1(t), y^2(t)\big) = (\cos t, \sin t).$

Recall that a {\it Finsler space with $(\alpha, \beta)$-metric} $(M, F)$ is given by a Finslerian norm $F = F(x, y): TM \longrightarrow \lbrack 0, \infty)$, where $F$ is a
positive one-homogeneous function of the two arguments $\alpha$ and $\beta$. Hereafter we consider only $(\alpha, \beta)$-metrics obtained by means of a positively definite
Riemannian metric $(M, a)$ on $M$ and a linear 1-form $\beta(x,y)=b_i(x)y^i$, such that $a(b, b) < 1$.

Following Shen (\cite{S2}), we can always write $F$ as  
\begin{eqnarray}
F = \alpha\cdot\phi\bigg(\frac{\beta}{\alpha}\bigg),
\end{eqnarray}
where $\phi: I= \lbrack -r, r\rbrack \longrightarrow \lbrack 0, \infty)$ is a $C^{\infty}$ function and the interval $I$ can be chosen large enough such that $r \ge |\frac{\beta}{\alpha}|$, for all $x \in M$ and $y \in T_xM$. 

We also recall
\begin{lemma}$\label{le:01}
$(\cite{S2})$\,$ The function $F = \alpha\cdot\phi(s)$, $s = \frac{\beta}{\alpha}$ is a Finsler metric for any $\alpha = \sqrt{a_{ij}y^iy^j}$ and any $\beta = b_iy^i$ with $\|\beta_x\|_\alpha < b_0$ if and only if $\phi = \phi(s)$ is a positive $C^\infty$ function on $(-b_0, b_0)$ satisfying the following conditions:
\begin{equation}\label{ec:1}
\phi(s) - s\phi'(s) + (b^2 - s^2)\phi''(s) > 0, \quad \vert s\vert \leq b < b_0.
\end{equation}
\end{lemma}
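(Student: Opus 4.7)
The plan is to reduce Lemma~\ref{le:01} to a pointwise positive-definiteness check on the fundamental tensor, then carry out the linear algebra in a convenient frame. Fixing $x\in M$, freezing $a_{ij}=a_{ij}(x)$ and $b_i=b_i(x)$, and setting $b:=\|\beta_x\|_\alpha<b_0$, I would note that by Cauchy--Schwarz $|s|=|\beta|/\alpha\le b$, with every value in $[-b,b]$ attained by a suitable $y$. Hence the requirement that $g_{ij}$ be positive definite for every $y\ne 0$ and every admissible $\beta$ is equivalent to a condition on $\phi$ quantified over $|s|\le b<b_0$, matching the quantifier in \eqref{ec:1}.

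The next step is a direct computation of $g_{ij}$ from $F=\alpha\phi(s)$ via the chain rule, using $\partial\alpha/\partial y^i=y_i/\alpha$ and $\partial s/\partial y^i=b_i/\alpha-s\,y_i/\alpha^{2}$ with $y_i:=a_{ij}y^j$. Routine bookkeeping yields the standard $(\alpha,\beta)$-decomposition
\begin{equation*}
g_{ij}=\phi(\phi-s\phi')\,a_{ij}+(\phi\phi''+(\phi')^{2})\,b_ib_j+\frac{\phi'(\phi-s\phi')-s\phi\phi''}{\alpha}(b_iy_j+b_jy_i)-\frac{s\bigl[\phi'(\phi-s\phi')-s\phi\phi''\bigr]}{\alpha^{2}}\,y_iy_j.
\end{equation*}
To extract the positivity conditions cleanly I would pass to an $a$-orthonormal frame at $x$ in which $y=(\alpha,0)$, so that $s=b_1$ and $b^{2}-s^{2}=b_2^{2}$. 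A short simplification --- in which the terms containing $s\phi\phi''$, $s(\phi')^{2}$ and $s\phi\phi'$ must cancel precisely --- collapses the three independent components of $g_{ij}$ to
\begin{equation*}
g_{11}=\phi^{2},\qquad g_{12}=\phi\phi'\,b_2,\qquad g_{22}=\phi(\phi-s\phi')+(\phi\phi''+(\phi')^{2})(b^{2}-s^{2}),
\end{equation*}
and consequently to
\begin{equation*}
\det(g_{ij})=\phi^{2}g_{22}-g_{12}^{2}=\phi^{3}\bigl[\phi-s\phi'+(b^{2}-s^{2})\phi''\bigr].
\end{equation*}

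By Sylvester's criterion, $g_{ij}$ is positive definite if and only if $g_{11}>0$ and $\det(g_{ij})>0$, which translates into $\phi\ne 0$ together with \eqref{ec:1}; combined with the requirement that $F$ be positive, this upgrades to $\phi>0$ throughout $(-b_0,b_0)$. The step I anticipate will dominate the work is the algebraic collapse producing $g_{11}=\phi^{2}$ and $g_{12}=\phi\phi'b_2$ --- the three rational-in-$\alpha$ cross terms must conspire to leave only a single bracketed factor in $\det(g_{ij})$, and signs and groupings of $s\phi\phi''$, $s(\phi')^{2}$, $s\phi\phi'$ are easy to confuse. Once these cancellations are performed carefully and the factored form of $\det(g_{ij})$ is in hand, the equivalence asserted in Lemma~\ref{le:01} follows immediately from Sylvester's criterion.
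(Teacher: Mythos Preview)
The paper does not prove this lemma; it is quoted from \cite{S2} as background. Your approach is correct and is precisely the standard argument found in Shen's work: compute $g_{ij}$, pass to an $a$-orthonormal frame with $y$ along the first axis, and read off positivity via Sylvester's criterion. In the two-dimensional setting of this paper your determinant identity $\det(g_{ij})=\phi^{3}[\phi-s\phi'+(b^{2}-s^{2})\phi'']$ is exactly what is needed.

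One remark that will save you most of the ``algebraic collapse'' you flag as the main difficulty: you never need to substitute the general decomposition of $g_{ij}$ to obtain $g_{11}$ and $g_{12}$. Homogeneity gives $g_{ij}y^{i}y^{j}=F^{2}$ and $g_{ij}y^{i}=\tfrac{1}{2}\partial_{y^{j}}F^{2}$, so in the frame $y=(\alpha,0)$ one has $g_{11}=\phi^{2}$ and $g_{12}=\phi\phi'\,b_{2}$ immediately. Only $g_{22}$ requires a second derivative, and at $y_{2}=0$ the cross terms drop out on sight, giving $g_{22}=\phi(\phi-s\phi')+(b^{2}-s^{2})\bigl(\phi\phi''+(\phi')^{2}\bigr)$ directly; the determinant then factors after a single cancellation of $(\phi')^{2}(b^{2}-s^{2})$. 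With this shortcut the proof is only a few lines.
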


\begin{remark}
\begin{enumerate}
\item Lemma \ref{le:01} implies that 
\begin{equation}\label{ec:2}
\phi(s) - s\phi'(s) > 0, \quad \vert s\vert < b_0
\end{equation}
\item In general, due to the presence of the 1-form $\beta$, the function $F$ is not absolute homogeneous.
\end{enumerate}
\end{remark}

Classical examples of $(\alpha,\beta)$- metrics are:
the {\it Randers metrics}, namely $F = \alpha + \beta$, or {\it Matsumoto metrics}, i.e. $F = \frac{\alpha^2}{\alpha - \beta}$.

For simplicity, we will use in the following the notations:
\begin{eqnarray}\label{5.16}
\phi'(s) = \frac{\partial\phi(t)}{\partial t}\bigg\vert_{t=s}, \qquad \phi'(-s) = \frac{\partial\phi(t)}{\partial t}\bigg\vert_{t=-s}.
\end{eqnarray}
In other words, we have
\begin{eqnarray}\label{5.17}
\big\lbrack\phi(-s)\big\rbrack' = \frac{d\phi(-s)}{ds} = -\phi'(-s) \\\nonumber
\big\lbrack\phi(-s)\big\rbrack'' = \frac{d^2\phi(-s)}{d^2s} = \phi''(-s).
\end{eqnarray}

Let us remark that Lemma \ref{le:01} implies 
\begin{lemma}$\label{le:02}$
Let $F = \alpha\cdot\phi\Big(\frac{\beta}{\alpha}\Big)$ be an $(\alpha,\beta)$ Finsler metric, i.e. $\phi: I \longrightarrow \lbrack 0, \infty)$ satifies condition in Lemma
\ref{le:01}. Then the reverse Finsler metric $\bar F(x, y) := F(x, -y)$ must be a Finsler metric as well.
\end{lemma}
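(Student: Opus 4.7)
The plan is to express the reverse metric $\bar F$ itself in $(\alpha,\beta)$-form and then verify the hypothesis of Lemma \ref{le:01} for the new generating function.

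First I would compute $\bar F$ explicitly. Since $\alpha(x,-y)=\sqrt{a_{ij}(-y^i)(-y^j)}=\alpha(x,y)$ and $\beta(x,-y)=b_i(-y^i)=-\beta(x,y)$, substituting $-y$ into $F=\alpha\cdot\phi(\beta/\alpha)$ gives
\begin{equation*}
\bar F(x,y)=\alpha\cdot\phi\!\left(-\frac{\beta}{\alpha}\right)=\alpha\cdot\bar\phi\!\left(\frac{\beta}{\alpha}\right),\qquad \bar\phi(s):=\phi(-s).
\end{equation*}
Thus $\bar F$ is again an $(\alpha,\beta)$-metric built from the \emph{same} Riemannian data $a$ and the same 1-form $\beta$ (so in particular $\|\beta_x\|_\alpha<b_0$ still holds), but with generating function $\bar\phi$ defined on $(-b_0,b_0)$.

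Next I would check that $\bar\phi$ satisfies the hypothesis of Lemma \ref{le:01}. Positivity and smoothness on $(-b_0,b_0)$ are immediate since $s\mapsto -s$ is a smooth involution of this interval and $\phi>0$. For the convexity inequality, using the rules (\ref{5.17}) I get
\begin{equation*}
\bar\phi(s)-s\bar\phi'(s)+(b^2-s^2)\bar\phi''(s)=\phi(-s)-s\bigl(-\phi'(-s)\bigr)+(b^2-s^2)\phi''(-s).
\end{equation*}
Setting $u:=-s$ and noting $s^2=u^2$ and $|s|\le b<b_0\iff |u|\le b<b_0$, the right-hand side becomes
\begin{equation*}
\phi(u)-u\phi'(u)+(b^2-u^2)\phi''(u)>0
\end{equation*}
by condition (\ref{ec:1}) applied to $\phi$ at the point $u$. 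Hence $\bar\phi$ satisfies (\ref{ec:1}) on $|s|\le b<b_0$, so Lemma \ref{le:01} guarantees that $\bar F$ is a genuine Finsler metric.

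No step here is a serious obstacle; the argument is essentially a change of variable $u=-s$ combined with the derivative identities (\ref{5.17}). The only point requiring mild care is book-keeping of signs in the first and second derivatives of $\bar\phi$, to make sure the middle term of the convexity inequality flips sign correctly so that the whole expression reduces to the one known to be positive for $\phi$.
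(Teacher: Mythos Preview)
Your proof is correct and follows essentially the same approach as the paper: write $\bar F=\alpha\cdot\bar\phi(\beta/\alpha)$ with $\bar\phi(s)=\phi(-s)$, then verify condition~(\ref{ec:1}) for $\bar\phi$ by the substitution $s\mapsto -s$, using that the interval $(-b_0,b_0)$ is symmetric about $0$. Your version is slightly more explicit in computing $\alpha(x,-y)$, $\beta(x,-y)$ and in checking positivity and smoothness of $\bar\phi$, but the argument is identical in substance.
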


\begin{proof}
Indeed, if $F$ is a Finsler metric, then the corresponding function $\phi$ must satisfy
\begin{eqnarray}\label{cond1}
\begin{cases}
\phi(s) - s\phi'(s) + (b^2 - s^2)\phi''(s) > 0, \quad \vert s\vert \leq b < b_0, \\
\phi(s) - s\phi'(s) > 0, \quad \vert s\vert < b_0.
\end{cases}
\end{eqnarray}
But, since $s$ belongs to an interval symmetric about $0$, the formulas above must hold good for $-s$ as well, i.e. by substituting $s$ with $-s$, we must have
\begin{eqnarray}
\begin{cases}
\phi(-s) + s\phi'(-s) + (b^2 - s^2)\phi''(-s) > 0, \quad \vert s\vert \leq b < b_0, \\
\phi(-s) + s\phi'(-s) > 0, \quad \vert s\vert < b_0.
\end{cases}
\end{eqnarray}
This conditions are, in fact, the necessary and sufficient conditions for $\bar F = \alpha\cdot\bar\phi\bigg(\frac{\beta}{\alpha}\bigg)$, $\bar \phi(s) := \phi(-s)$
to be a Finsler metric.
$\qedd$
\end{proof}

\begin{lemma}$\label{le:03}$
If $F = \alpha\cdot\phi\Big(\frac{\beta}{\alpha}\Big)$ is an $(\alpha,\beta)$ Finsler metric, then $\phi$ cannot be an odd function.
\end{lemma}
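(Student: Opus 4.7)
The plan is to derive a contradiction from the defining positivity conditions for $\phi$ recorded in Lemma \ref{le:01}, in particular its consequence (\ref{ec:2}).

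First I would observe that the interval $I = [-r, r]$ on which $\phi$ is defined is symmetric about $0$, and that $0 \in I$ lies in the open interval $(-b_0, b_0)$ where the strict inequality (\ref{ec:2}) is valid. Evaluating (\ref{ec:2}) at the single point $s = 0$ yields
\begin{equation*}
\phi(0) - 0 \cdot \phi'(0) = \phi(0) > 0.
\end{equation*}

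Now suppose for contradiction that $\phi$ is an odd function on $I$. Then $\phi(-s) = -\phi(s)$ for every $s \in I$, and in particular $\phi(0) = -\phi(0)$, so $\phi(0) = 0$. This directly contradicts the strict positivity $\phi(0) > 0$ just obtained from the Finsler metric condition. Hence $\phi$ cannot be odd. $\qedd$

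There is essentially no obstacle here: the whole proof reduces to evaluating the necessary condition (\ref{ec:2}) at $s = 0$. The only point worth checking is that $s = 0$ really belongs to the domain of admissible values, which is immediate since $0$ lies in the symmetric interval $(-b_0, b_0)$ (indeed, $s = \beta/\alpha = 0$ occurs whenever $\beta(x,y) = 0$, and this is certainly permitted in the set-up). No appeal to the stronger condition (\ref{ec:1}) is needed.
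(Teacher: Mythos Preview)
Your proof is correct and considerably more direct than the paper's. The paper argues by computing the parity of the derivatives: if $\phi$ is odd then $\phi'$ is even and $\phi''$ is odd, and substituting these relations into the full convexity condition (\ref{ec:1}) yields
\[
-\phi(-s) + s\phi'(-s) - (b^2 - s^2)\phi''(-s) > 0,
\]
which is exactly the negation of (\ref{ec:1}) evaluated at $-s$. You instead bypass all of this by evaluating the weaker consequence (\ref{ec:2}) at the single point $s=0$, obtaining $\phi(0)>0$, which an odd function cannot satisfy. Your route is shorter and uses strictly less; in fact one could even avoid (\ref{ec:2}) altogether, since Lemma~\ref{le:01} already requires $\phi$ to be positive on $(-b_0,b_0)$, which by itself rules out odd $\phi$. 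The paper's argument has the minor advantage of not singling out a special value of $s$, but it is otherwise just a more laborious path to the same contradiction.
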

\begin{proof}
Let us assume that $F$ is Finsler and the corresponding $\phi$ is odd, i.e. $\phi(- s) = -\phi(s)$, for all $s \in (-b_0, b_0)$, then it follows
\begin{equation}
\begin{split}
& \big\lbrack\phi(-s)\big\rbrack' = \big\lbrack-\phi(s)\big\rbrack' = -\phi'(s),\\
& \big\lbrack\phi(-s)\big\rbrack'' = \big\lbrack-\phi(s)\big\rbrack'' = -\phi''(s).
\end{split}
\end{equation}

On the other hand, using the derivation rule of composed functions, we get
\begin{eqnarray*}
\big\lbrack\phi(-s)\big\rbrack' =  -\phi'(-s), \qquad \big\lbrack\phi(-s)\big\rbrack'' = \big\lbrack-\phi(s)\big\rbrack'' = \phi''(-s).
\end{eqnarray*}
and therefore, for odd functions, we obtain $\phi'(- s) = \phi'(s)$, $\phi''(- s) = -\phi''(s)$.

Substituting these formulas in \eqref{ec:1}, we have
\begin{equation*}
-\phi(-s) + s\phi'(-s) - (b^2 - s^2)\phi''(-s) > 0, \quad \vert s\vert \leq b < b_0, \\
\end{equation*}
but these formula contradicts \eqref{ec:1} written by putting $-s$ instead of $s$, i.e. it is impossible for $\phi$ to be an odd function.
$\qedd$
\end{proof}


\section{Moving frames on Finsler surfaces}

 The 3-manifold $\Sigma_1$ can be regarded as the orthonormal frame bundle over
M with respect to $a$ and therefore it has a canonical coframing $\{\alpha^1, \alpha^2, \alpha^3\}$, where $\alpha^1$, $\alpha^2$ are the tautological 1-forms and 
$\alpha^3$ is the Levi-Civita connection form. The canonical coframing $\{\alpha^1, \alpha^2, \alpha^3\}$ satisfies the structure equations 
\begin{eqnarray}\label{ecstruct00}
d\alpha^1 & = & \alpha^2\wedge\alpha^3 ,\\\nonumber
d\alpha^2 & = & \alpha^3\wedge\alpha^1,\\\nonumber
d\alpha^3 & = &  k\alpha^1\wedge\alpha^2,
\end{eqnarray}
where the function $k: M \longrightarrow {\bf R}$ is the Gauss curvature of the Riemannian structure $(M, a)$.

It is well known that for a Finsler structure $(M, F)$ with indicatrix bundle $\Sigma\subset TM$ a canonical coframing $\{\omega^1, \omega^2, \omega^3\}$ can be as well
constructed. The corresponding structure equations are
\begin{eqnarray}\label{ecstruct01}
d\omega^1 & = & -I\omega^1\wedge\omega^3 + \omega^2\wedge\omega^3 ,\\\nonumber
d\omega^2 & = & -\omega^1\wedge\omega^3,\\\nonumber
d\omega^3 & = &  K\omega^1\wedge\omega^2 - J\omega^1\wedge\omega^3,
\end{eqnarray}
where the functions $I, J, K: \Sigma \longrightarrow {\bf R}$ are called {\it the Cartan, Landsberg} and {\it flag curvatures}, respectively (see \cite{Br1997}, \cite{Br2002}, 
\cite{SSS}). We point out that, unlikely 
the Riemannian case, all these curvatures live on $\Sigma$ and not on the base manifold $M$.

Regarding now the Finslerian indicatrix bundle $\Sigma \equiv \Sigma_p$ as a deformation of the Riemannian unit tangent bundle $\Sigma_1$ by $\rho: \Sigma_p \longrightarrow \Sigma_1$, 
where $p: \Sigma_1 \longrightarrow {\bf R}^+$ gives the Finslerian norm, it is quite obvious that the cotangent map  $\rho^*: T^*\Sigma_1 \longrightarrow T^*\Sigma_p,$ will
allow to obtain the Finsler coframing $\{\omega^1, \omega^2, \omega^3\}$ from the Riemannian one $\{\alpha^1, \alpha^2, \alpha^3\}$. Indeed, some computations show 
\begin{eqnarray}\label{01}
\omega^1 & = & \rho^*\big(\sqrt{p(p + p_{33})}\alpha^1\big),\\\nonumber
\omega^2 & = & \rho^*(p\alpha^2 + p_3\alpha^1),\\\nonumber
\omega^3 & = & \rho^*\bigg(\frac{(p + p_{33})\alpha^3 + (p_{32} - p_1)\alpha^2}{\sqrt{p(p + p_{33})}} + \frac{P_p\alpha^1}{\sqrt{p^3(p + p_{33})^3}}\bigg),
\end{eqnarray}
where
\begin{eqnarray}
\nonumber
P_p & = & \frac{1}{2}(p_3p_{32}p_{33} - p_3p_{33}p_1 + pp_{333}p_{32} - pp_1p_{333} + 2pp_{32}p_3 - 2pp_1p_3\\ & - & 3pp_2p_{33} - p^2p_{332} - 2p^2p_2 - p_2p^2_{33} - pp_{332}p_{33}).
\end{eqnarray}

It can be seen that the strongly convexity of $\Sigma_r$ is equivalent to the differentiable condition, (see \cite{Br1997}, \cite{Ca})
$$p_{33} + p >0,$$
where the subscript indicate the directional derivatives with respect to the Riemannian coframing $\{\alpha^1, \alpha^2, \alpha^3\}$, i.e. for any differentiable function 
$f: \Sigma_1 \longrightarrow {\bf R}$ we denote
\begin{eqnarray}
df = f_1\cdot\alpha^1 + f_2\cdot\alpha^2 + f_3\cdot \alpha^3.
\end{eqnarray}



It is known that the geodesics of the Riemannian structure $(M, a)$ are the projections to $M$ of the integral lines of the exterior differential system $\{\alpha^1=0,  \alpha^3=0\}$
defined on $\Sigma_1$.

Similarly, for a Finsler structure $(M, F)$ with indicatrix bundle $\Sigma$ and canonical coframing $\{\omega^1, \omega^2, \omega^3\}$, the Finslerian geodesics are the 
projections to $M$ of the integral lines of the exterior differential system $\{\omega^1 = 0, \omega^3 = 0\}$ on $\Sigma$.

Let us consider now another Finsler structure $\overline F$ on the same surface $M$. This implies that there exists another smooth positive function, say 
$r:\Sigma_1 \longrightarrow {\bf R}^+$,
such that 
\begin{eqnarray}
\Sigma_r = \bigg\{\frac{1}{r(u)}\cdot u : u\in\Sigma_1 \bigg\}
\end{eqnarray}
is the indicatrix bundle of $(M, \overline F)$. The inverse function $\bar\rho: \Sigma_r \longrightarrow \Sigma_1$,
\begin{eqnarray}
\bar\rho\bigg(\frac{1}{r(u)}\cdot u\bigg) = u, \quad \forall u\in\Sigma_1
\end{eqnarray}
allows to recover the original Riemannian structure $(M, a)$.

Obviously, $\bar\rho$ is invertible in the sense that we can always define 
\begin{eqnarray}
\bar\rho^{-1}: \Sigma_1 \longrightarrow \Sigma_r, \quad \bar\rho^{-1} (u)= \frac{1}{r(u)}\cdot u, \quad \forall u\in\Sigma_1.
\end{eqnarray}

This means that the following diagram is commutative 
\begin{eqnarray*}
\begin{array}[c]{ccccc}
\Sigma_p & \xrightarrow{\mu}& \Sigma_r\\
&\rho\searrow \qquad \swarrow\bar\rho\\
& \Sigma_1 &
\end{array}
\end{eqnarray*}
where $\mu:=\bar\rho^{-1}\circ \rho$, and therefore, the 
coframings $\{\alpha^1, \alpha^2, \alpha^3\}$, $\{\omega^1, \omega^2, \omega^3\}$ and $\{\bar\omega^1, \bar\omega^2, \bar\omega^3\}$ on $\Sigma$, 
$\Sigma_p$ and $\Sigma_r$, respectively, are related as follows
\begin{eqnarray*}
\begin{array}[c]{ccccc}
\{\omega^i\}& \xleftarrow{\mu^*}& \{\overline\omega^i\}\\
&\rho^*\nwarrow \qquad \nearrow\bar\rho^*\\
& \{\alpha^i\} &
\end{array}
\end{eqnarray*}
where $\{\bar\omega^1, \bar\omega^2, \bar\omega^3\}$ is the associated canonical coframe of $(M, \overline F)$ defined in the same way as above. We also have 
\begin{eqnarray}\label{02}
\overline\omega^1 & = & \overline\rho^*\big(\sqrt{r(r + r_{33})}\alpha^1\big),\\\nonumber
\overline\omega^2 & = & \overline\rho^*(r\alpha^2 + r_3\alpha^1),\\\nonumber
\overline\omega^3 & = & \overline\rho^*\bigg(\frac{(r + r_{33})\alpha^3 + (r_{32} - r_1)\alpha^2}{\sqrt{r(r + r_{33})}} + \frac{P_r \alpha^1}{\sqrt{r^3(r + r_{33})^3}}\bigg),
\end{eqnarray}
where
\begin{eqnarray}\nonumber
P_r & = & \frac{1}{2}(r_3r_{32}r_{33} - r_3r_{33}r_1 + rr_{333}r_{32} - rr_1r_{333} + 2rr_{32}r_3 - 2rr_1r_3\\\nonumber & - & 3rr_2r_{33} - r^2r_{332} - 2r^2r_2 - r_2r^2_{33} - rr_{332}r_{33}).
\end{eqnarray}

Similar formulas can be written by means of $\mu$ in order to construct the relation between the coframings $\{\omega^1, \omega^2, \omega^3\}$ and $\{\bar\omega^1, \bar\omega^2, \bar\omega^3\}$,
but we do not need to do this.

With this setting, one can see that  the Finsler $F$ and $\overline F$ structures are projectively equivalent if and only  $\textrm{span} \{\omega^1, \omega^3 \}=\mu^*(\textrm{span} \{\bar\omega^1, \bar\omega^3 \})$.
Since both $\Sigma_p$, $\Sigma_r$ are topologically diffeomorphic to projective sphere $SM :=\widetilde{TM} / _\sim$ and $\mu : \Sigma_p \longrightarrow \Sigma_r$ is diffeomorphism,
we identify here the 3-manifolds $\Sigma_p$ and $\Sigma_r$, where the equivalence relation $\sim$ is defined by $(x, y) \sim (x, z)$ if and only if $y$, $z$ are positive 
multiples of each other.

In terms of the Riemannian canonical coframing $\{\alpha^1, \alpha^2, \alpha^3\}$ the above condition become 
$\textrm{span}\{\alpha^1,
 \mathcal M_2\cdot\alpha^2 +\mathcal M_3\cdot\alpha^3\} =\textrm{span}\{\alpha^1, \overline{\mathcal M_2}\cdot\alpha^2 +\overline{\mathcal M_3}\cdot\alpha^3\} = 0$, where we denote for simplicity
 \begin{eqnarray}
\begin{cases}\mathcal M_2 = \frac{p_{32} - p_1}{\sqrt{p(p + p_{33})}} \\
\mathcal M_3 = \sqrt{\frac{p + p_{33}}{p}} 
\end{cases},\qquad
\begin{cases}\overline{\mathcal M_2} = \frac{r_{32} - r_1}{\sqrt{r(r + r_{33})}}\\
\overline{\mathcal M_3} = \sqrt{\frac{r + r_{33}}{r}}.
\end{cases} 
\end{eqnarray}
It can be seen easily now that the projective equivalence condition reduce to 
\begin{eqnarray}
\frac{\mathcal M_3}{\mathcal M_2} = \frac{\overline{\mathcal M_3}}{\overline{\mathcal M_2}}, \Longleftrightarrow \frac{p + p_{33}}{p_{32} - p_1} = \frac{r + r_{33}}{r_{32} - r_1},
\end{eqnarray}
provided $\mathcal M_2 \not= 0$ and $\overline{\mathcal M_2} \not= 0$. We observe that the geometrical meaning of $\mathcal M_2 = 0$ is that $(M, F)$ and $(M, \alpha)$ are
projectively related, i.e. the Finslerian geodesic of $(M, F)$ and Riemannian geodesic of $(M, \alpha)$ coincide. In this case, obviously $(M, F)$, $(M, \overline F)$ and $(M, \alpha)$
are all projectively equivalent. We consider this case to be trivial and exclude it from our analysis. Therefore, we always assume in the following that the Finslerian structures 
$(M, F)$ and $(M, \overline F)$ are not projectively equivalent to $(M, \alpha)$, i.e. $\mathcal M_2 \not= 0$ and $\overline{\mathcal M_2} \not= 0$.

In order to obtain the condition for $(M, F)$ to be with reversible geodesics, we impose the condition that $\overline F(x, y) = F(x, -y)$, for all $(x, y) \in TM$, where 
$\overline F(x, y)$ is the reverse Finsler structure associated to $F$ on $M$. In this case, with the notations above, we obtain:
\begin{proposition}
Let $(M, F)$ be a Finsler surface and $(M, \overline F)$ be the associated reverse Finsler structure on $M$. We assume that both Finslerian structures $F$ and $\overline F$ are 
not Riemannian projectively equivalent. Then, $(M, F)$ is with reversible geodesics if and only if  
\begin{eqnarray}\label{cond}
\frac{p + p_{33}}{p_{32} - p_1} = \frac{r + r_{33}}{r_{32} - r_1}
\end{eqnarray}
with the notations above.
\end{proposition}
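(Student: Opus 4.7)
The plan is to assemble the pieces already laid out in Sections 2 and 3 into a clean equivalence chain:
\emph{reversibility of geodesics} $\Longleftrightarrow$ \emph{projective equivalence of $F$ and $\overline F$} $\Longleftrightarrow$ \emph{equality of two Pfaffian systems on the projective sphere} $\Longleftrightarrow$ \emph{the displayed identity} \eqref{cond}.

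First, I would unpack the definition. A Finsler surface $(M,F)$ has reversible geodesics precisely when every oriented geodesic of $F$, traversed backwards, is an $F$-geodesic; as unparametrized curves, this is the same as saying the geodesics of $F$ and of $\overline F(x,y):=F(x,-y)$ coincide. Lemma \ref{le:02} guarantees $\overline F$ really is a Finsler metric, so the canonical coframing construction applies to it, producing $\{\overline\omega^1,\overline\omega^2,\overline\omega^3\}$ on $\Sigma_r$ via formulas \eqref{02}. Hence reversibility of geodesics amounts to projective equivalence between $F$ and $\overline F$.

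Next, I would invoke the characterization already recorded in the moving-frame setup: the $F$-geodesics are the projections of integral curves of $\{\omega^1=0,\omega^3=0\}$ on $\Sigma_p$, and similarly $\overline F$-geodesics come from $\{\overline\omega^1=0,\overline\omega^3=0\}$ on $\Sigma_r$. Since $\Sigma_p$ and $\Sigma_r$ are both identified with the projective sphere $SM$ via the diffeomorphism $\mu=\bar\rho^{-1}\circ\rho$, projective equivalence is equivalent to
$\textrm{span}\{\omega^1,\omega^3\}=\mu^*\textrm{span}\{\overline\omega^1,\overline\omega^3\}$
as rank-$2$ sub-bundles of $T^*\Sigma_p$. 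Because everything pulls back uniquely to $\Sigma_1$ through $\rho^*$ (resp.\ $\bar\rho^*$), this condition is cleanest to check on $\Sigma_1$, in terms of the Riemannian coframe $\{\alpha^1,\alpha^2,\alpha^3\}$.

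The key computational step is then to read off the two spans from \eqref{01} and \eqref{02}. From $\omega^1=\rho^*\bigl(\sqrt{p(p+p_{33})}\,\alpha^1\bigr)$ the first form is a nonzero multiple of $\alpha^1$; and from $\omega^3$ the contribution modulo $\alpha^1$ is a multiple of $(p_{32}-p_1)\alpha^2+(p+p_{33})\alpha^3=\sqrt{p(p+p_{33})}\,(\mathcal M_2\alpha^2+\mathcal M_3\alpha^3)$. Thus, after pullback, $\textrm{span}\{\omega^1,\omega^3\}$ corresponds to $\textrm{span}\{\alpha^1,\mathcal M_2\alpha^2+\mathcal M_3\alpha^3\}$, and analogously on the $\overline F$-side one gets $\textrm{span}\{\alpha^1,\overline{\mathcal M_2}\alpha^2+\overline{\mathcal M_3}\alpha^3\}$. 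Under the standing non-triviality assumption $\mathcal M_2,\overline{\mathcal M_2}\neq 0$ (which excludes the case where $F$ or $\overline F$ is projectively equivalent to $(M,\alpha)$), equality of these two rank-$2$ subspaces is equivalent to proportionality of the $(\alpha^2,\alpha^3)$-components, i.e.\ $\mathcal M_3/\mathcal M_2=\overline{\mathcal M_3}/\overline{\mathcal M_2}$. Simplifying the ratio and squaring removes the common factor $\sqrt{(p+p_{33})/p}$ versus its $r$-analog, giving exactly the identity \eqref{cond}.

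The most delicate point is the reduction from the span-equality to the ratio of the two $\mathcal M_i$'s: one must verify that $\alpha^1$ is really the common direction picked out by the $\omega^1$ and $\overline\omega^1$ sides after using $\mu^*$, and that the remaining $\alpha^1$-components in $\omega^3,\overline\omega^3$ (the $P_p$- and $P_r$-terms) are absorbed into the $\alpha^1$ generator of the span and therefore irrelevant. Once this is seen, the $(\alpha^2,\alpha^3)$-components are determined up to scale by a single ratio, and \eqref{cond} follows directly. The non-triviality hypotheses $\mathcal M_2\neq0$, $\overline{\mathcal M_2}\neq0$ are precisely what make these ratios well-defined and complete the equivalence.
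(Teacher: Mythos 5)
Your proposal is correct and follows essentially the same route as the paper: the Proposition there carries no separate proof but is exactly the summary of the chain you describe — geodesics as integral curves of $\{\omega^1=0,\ \omega^3=0\}$, projective equivalence as equality of the pulled-back spans on $\Sigma_1$, and reduction to $\mathcal M_3/\mathcal M_2=\overline{\mathcal M_3}/\overline{\mathcal M_2}$ under the standing hypothesis $\mathcal M_2\neq0$, $\overline{\mathcal M_2}\neq0$. One cosmetic remark: no squaring is needed at the last step, since the ratio simplifies directly, $\mathcal M_3/\mathcal M_2=\sqrt{\tfrac{p+p_{33}}{p}}\cdot\tfrac{\sqrt{p(p+p_{33})}}{p_{32}-p_1}=\tfrac{p+p_{33}}{p_{32}-p_1}$.
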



\section{The reversible geodesics condition}

We start with the Riemannian surface $(M, a)$ and let us consider the isothermal coordinates $x = (x^1, x^2)$ on $M$, namely, in these local coordinates
$a_{ij}=e^{2\nu}\delta_{ij}$, where $\nu$ is a smooth function on $M$ and $\delta_{ij}$ the Kronecker operator. This allows to write the canonical Riemannian 
coframing $\{\alpha^1, \alpha^2, \alpha^3\}$ as 
\begin{eqnarray}\label{riefin0}
\alpha^1 & = & -e^{\nu(x_1, x_2)}\sin t\;dx^1 + e^{\nu(x_1, x_2)}\cos t\;dx^2, \\\nonumber
\alpha^2 & = & e^{\nu(x_1, x_2)}\cos t\;dx^1 + e^{\nu(x_1, x_2)}\sin t\;dx^2, \\\nonumber
\alpha^3 & = & - \frac{\partial\nu(x_1, x_2)}{\partial x^2}dx^1 + \frac{\partial\nu(x_1, x_2)}{\partial x^1}dx^2 + dt,
\end{eqnarray}
where $t\in \lbrack 0, 2\pi)$ is the fiber coordinate.
The unit circle $\Sigma_1\big\vert_x\in T_xM$ of $(M, a)$ is therefore parametrized as 
\begin{eqnarray}
\begin{cases}
y^1 = e^{-\nu(x^1, x^2)}\cdot\cos t\\
y^2 = e^{-\nu(x^1, x^2)}\cdot\sin t, \qquad t\in \lbrack 0, 2\pi).
\end{cases}
\end{eqnarray}

One can easily remark that for a vector $(y^1, y^2)$, the opposite vector is given by $-y = (-y^1, -y^2) = \big(e^{-\nu(x^1, x^2)}\cdot\cos(l + \pi), e^{-\nu(x^1, x^2)}\cdot\sin(l + \pi)\big)$.
Therefore, if we denote by $p$ and $r$ the Finslerian norms corresponding to $F(x, y)$ and $\overline F(x, y) = F(x, -y)$ considered as positive real valued function on $\Sigma_1$
as explained before, then we get
\begin{lemma}
The relation between $p$ and $r$ is given by
\begin{eqnarray}\label{rel}
r(x^1, x^2, t) = p(x^1, x^2, t + \pi).
\end{eqnarray}
\end{lemma}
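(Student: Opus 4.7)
The plan is to unwind the definitions of $p$ and $r$ in terms of the underlying Finsler functions $F$ and $\overline F$, and then apply the elementary antipodal identity for the trigonometric parametrization of the Riemannian unit circle. Since the lemma is essentially a bookkeeping statement about how the identification $\Sigma_1\simeq\Sigma_p\simeq\Sigma_r$ interacts with the involution $y\mapsto -y$, I expect no serious obstacles; the point is to make the notational translation between fiber coordinates explicit.

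First, I would recall the identification explained earlier in the paper: if one parametrizes $\Sigma_1|_x$ by the Euclidean angle $t\in[0,2\pi)$, then $p$ is the Finslerian norm of the corresponding Riemannian unit vector. In isothermal coordinates this unit vector is $u(x,t)=\bigl(e^{-\nu(x)}\cos t,\,e^{-\nu(x)}\sin t\bigr)$, so that
\begin{equation*}
p(x^1,x^2,t)=F\bigl(x^1,x^2,e^{-\nu}\cos t,e^{-\nu}\sin t\bigr).
\end{equation*}
By exactly the same construction applied to $\overline F$, the corresponding radial function satisfies
\begin{equation*}
r(x^1,x^2,t)=\overline F\bigl(x^1,x^2,e^{-\nu}\cos t,e^{-\nu}\sin t\bigr)=F\bigl(x^1,x^2,-e^{-\nu}\cos t,-e^{-\nu}\sin t\bigr),
\end{equation*}
using the definition $\overline F(x,y)=F(x,-y)$.

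Next, I would invoke the identities $\cos(t+\pi)=-\cos t$ and $\sin(t+\pi)=-\sin t$, which give $-u(x,t)=u(x,t+\pi)$. Substituting this into the expression for $r$ yields
\begin{equation*}
r(x^1,x^2,t)=F\bigl(x^1,x^2,e^{-\nu}\cos(t+\pi),e^{-\nu}\sin(t+\pi)\bigr)=p(x^1,x^2,t+\pi),
\end{equation*}
which is the claimed relation \eqref{rel}. The only subtlety worth a sentence is that the fiber coordinate $t$ is defined modulo $2\pi$, so the shift by $\pi$ is well-defined on $\Sigma_1$, and everything is consistent with the fact that $u(x,\cdot)$ descends to a map from $\mathbb{R}/2\pi\mathbb{Z}$ to $\Sigma_1|_x$.
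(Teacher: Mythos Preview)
Your argument is correct and matches the paper's approach exactly: the paper simply observes that the opposite vector $-y$ corresponds to the angle shift $t\mapsto t+\pi$ in the trigonometric parametrization of $\Sigma_1|_x$, from which the lemma follows immediately. You have merely spelled out explicitly what the paper leaves as a one-line remark.
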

Straightforward computations give immediately the relations between the directional derivatives of $p$ and $r$ with respect to the Riemannian coframing 
$\{\alpha^1, \alpha^2, \alpha^3\}$ and the partial derivatives with respect to the natural coordinates $(x^1, x^2, t)$. 

We have 
\begin{eqnarray}
p_1 & = &e^{-\nu(x^1,x^2)}\bigg(-\frac{\partial p(x^1, x^2, t)}{\partial x^1}\sin t + \frac{\partial p(x^1, x^2, t)}{\partial x^2}\cos t\\\nonumber & - & \frac{\partial p(x^1, x^2, t)}{\partial t}\Big(\frac{\partial \nu(x^1, x^2)}{\partial x^1}\cos t + \frac{\partial \nu(x^1, x^2)}{\partial x^2}\sin t\Big)\bigg),\\\nonumber
p_2 & = &e^{-\nu(x^1,x^2)}\bigg(\frac{\partial p(x^1, x^2, t)}{\partial x^1}\cos t + \frac{\partial p(x^1, x^2, t)}{\partial x^2}\sin t\\\nonumber & + & \frac{\partial p(x^1, x^2, t)}{\partial t}\Big(\frac{\partial \nu(x^1, x^2)}{\partial x^2}\cos t - \frac{\partial \nu(x^1, x^2)}{\partial x^1}\sin t\Big)\bigg),\\\nonumber
p_3 & = &\frac{\partial p(x^1, x^2, t)}{\partial t},\\\nonumber
p_{31} & = & e^{-\nu(x^1,x^2)}\bigg(-\frac{\partial^2 p(x^1, x^2, t)}{\partial x^1\partial t}\sin t + \frac{\partial^2 p(x^1, x^2, t)}{\partial x^2\partial t}\cos t\\\nonumber & - & \frac{\partial^2 p(x^1, x^2, t)}{\partial t^2}\Big(\frac{\partial \nu(x^1, x^2)}{\partial x^1}\cos t + \frac{\partial \nu(x^1, x^2)}{\partial x^2}\sin t\Big)\bigg),\\\nonumber
p_{32} & = &e^{-\nu(x^1,x^2)}\bigg(\frac{\partial^2 p(x^1, x^2, t)}{\partial x^1\partial t}\cos t + \frac{\partial^2 p(x^1, x^2, t)}{\partial x^2\partial t}\sin t\\\nonumber & + & \frac{\partial^2 p(x^1, x^2, t)}{\partial t^2}\Big(\frac{\partial \nu(x^1, x^2)}{\partial x^2}\cos t - \frac{\partial \nu(x^1, x^2)}{\partial x^1}\sin t\Big)\bigg),\\\nonumber
p_{33} & = &\frac{\partial^2 p(x^1, x^2, t)}{\partial t^2}.
\end{eqnarray}
From here, it follows
\begin{eqnarray}
p + p_{33} & = & p(x_1, x_2, t) + \frac{\partial^2 p(x^1, x^2, t)}{\partial t^2},\\\nonumber
p_{32} - p_1 & = & e^{-\nu(x^1,x^2)}\bigg(\frac{\partial^2 p(x^1, x^2, t)}{\partial x^1\partial t}\cos t + \frac{\partial^2 p(x^1, x^2, t)}{\partial x^2\partial t}\sin t\\\nonumber & + & \frac{\partial^2 p(x^1, x^2, t)}{\partial t^2}\Big(\frac{\partial \nu(x^1, x^2)}{\partial x^2}\cos t - \frac{\partial \nu(x^1, x^2)}{\partial x^1}\sin t\Big)\bigg)\\\nonumber 
& - & e^{-\nu(x^1,x^2)}\bigg(-\frac{\partial p(x^1, x^2, t)}{\partial x^1}\sin t + \frac{\partial p(x^1, x^2, t)}{\partial x^2}\cos t\\\nonumber & - & \frac{\partial p(x^1, x^2, t)}{\partial t}\Big(\frac{\partial \nu(x^1, x^2)}{\partial x^1}\cos t + \frac{\partial \nu(x^1, x^2)}{\partial x^2}\sin t\Big)\bigg).
\end{eqnarray}
Similarly, taking into account of \eqref{rel}, we obtain for $r$
\begin{eqnarray}
r_1 & = &e^{-\nu(x^1,x^2)}\bigg(-\frac{\partial p(x^1, x^2, t + \pi)}{\partial x^1}\sin t + \frac{\partial p(x^1, x^2, t + \pi)}{\partial x^2}\cos t\\\nonumber & - & \frac{\partial p(x^1, x^2, t + \pi)}{\partial t}\Big(\frac{\partial \nu(x^1, x^2)}{\partial x^1}\cos t + \frac{\partial \nu(x^1, x^2)}{\partial x^2}\sin t\Big)\bigg),\\\nonumber
r_2 & = &e^{-\nu(x^1,x^2)}\bigg(\frac{\partial p(x^1, x^2, t + \pi)}{\partial x^1}\cos t + \frac{\partial p(x^1, x^2, t + \pi)}{\partial x^2}\sin t\\\nonumber & + & \frac{\partial p(x^1, x^2, t + \pi)}{\partial t}\Big(\frac{\partial \nu(x^1, x^2)}{\partial x^2}\cos t - \frac{\partial \nu(x^1, x^2)}{\partial x^1}\sin t\Big)\bigg),\\\nonumber
r_3 & = &\frac{\partial p(x^1, x^2, t + \pi)}{\partial t},\\\nonumber
r_{31} & = &e^{-\nu(x^1,x^2)}\bigg(-\frac{\partial^2 p(x^1, x^2, t + \pi)}{\partial x^1\partial t}\sin t + \frac{\partial^2 p(x^1, x^2, t + \pi)}{\partial x^2\partial t}\cos t\\\nonumber & - & \frac{\partial^2 p(x^1, x^2, t + \pi)}{\partial t^2}\Big(\frac{\partial \nu(x^1, x^2)}{\partial x^1}\cos t + \frac{\partial \nu(x^1, x^2)}{\partial x^2}\sin t\Big)\bigg),\\\nonumber
r_{32} & = &e^{-\nu(x^1,x^2)}\bigg(\frac{\partial^2 p(x^1, x^2, t + \pi)}{\partial x^1\partial t}\cos t + \frac{\partial^2 p(x^1, x^2, t + \pi)}{\partial x^2\partial t}\sin t\\\nonumber & + & \frac{\partial^2 p(x^1, x^2, t + \pi)}{\partial t^2}\Big(\frac{\partial \nu(x^1, x^2)}{\partial x^2}\cos t - \frac{\partial \nu(x^1, x^2)}{\partial x^1}\sin t\Big)\bigg),\\\nonumber
r_{33} & = &\frac{\partial^2 p(x^1, x^2, t + \pi)}{\partial t^2}
\end{eqnarray}
and
\begin{eqnarray}
r + r_{33} & = & p(x_1, x_2, \pi + t) + \frac{\partial^2 p(x^1, x^2, \pi + t)}{\partial t^2},\\\nonumber
r_{32} - r_1 & = & e^{-\nu(x^1,x^2)}\bigg(\frac{\partial^2 p(x^1, x^2, \pi + t)}{\partial x^1\partial t}\cos t + \frac{\partial^2 p(x^1, x^2, \pi + t)}{\partial x^2\partial t}\sin t\\\nonumber & + & \frac{\partial^2 p(x^1, x^2, \pi + t)}{\partial t^2}\Big(\frac{\partial \nu(x^1, x^2)}{\partial x^2}\cos t - 
\frac{\partial \nu(x^1, x^2)}{\partial x^1}\sin t\Big)\bigg)\\\nonumber & - & e^{-\nu(x^1,x^2)}\bigg(-\frac{\partial p(x^1, x^2, \pi + t)}{\partial x^1}\sin t + \frac{\partial p(x^1, x^2, \pi + t)}{\partial x^2}\cos t\\\nonumber & - & \frac{\partial p(x^1, x^2, \pi + t)}{\partial t}\Big(\frac{\partial \nu(x^1, x^2)}{\partial x^1}\cos t + \frac{\partial \nu(x^1, x^2)}{\partial x^2}\sin t\Big)\bigg).
\end{eqnarray}
Using all these formulas, the projectively equivalence condition \eqref{cond} becomes 
$$(p_{32} - p_1)\cdot(r + r_{33}) - (r_{32} - r_1)\cdot(p + p_{33}) = 0,$$ or equivalently,
\begin{eqnarray}\label{ecprinc}
& & \cos t \bigg(\Big(\frac{\partial^2p}{\partial x^1\partial t} - \frac{\partial p}{\partial x^2} + \frac{\partial p}{\partial t} \frac{\partial\nu}{\partial x^1}\Big)\Big(\frac{\partial^2r}{\partial t^2} + r\Big)\\\nonumber & - & \Big(\frac{\partial^2r}{\partial x^1\partial t} - \frac{\partial r}{\partial x^2} 
+ \frac{\partial r}{\partial t} \frac{\partial\nu}{\partial x^1}\Big)\Big(\frac{\partial^2p}{\partial t^2} + p\Big) + \frac{\partial\nu}{\partial x^2}\Big(\frac{\partial^2p}{\partial t^2}r - \frac{\partial^2r}{\partial t^2}p\Big)\bigg)\\\nonumber & + & \sin t \bigg(\Big(\frac{\partial^2p}{\partial x^2\partial t} 
+ \frac{\partial p}{\partial x^1} + \frac{\partial p}{\partial t} \frac{\partial\nu}{\partial x^2}\Big)\Big(\frac{\partial^2r}{\partial t^2} + r\Big)\\\nonumber & - & \Big(\frac{\partial^2r}{\partial x^2\partial t} + \frac{\partial r}{\partial x^1} +
 \frac{\partial r}{\partial t} \frac{\partial\nu}{\partial x^2}\Big)\Big(\frac{\partial^2p}{\partial t^2} + p\Big) - \frac{\partial\nu}{\partial x^1}\Big(\frac{\partial^2p}{\partial t^2}r - \frac{\partial^2r}{\partial t^2}p\Big)\bigg) = 0, 
\end{eqnarray}
and further we have
\begin{eqnarray}
& & \cos t \bigg(\Big(\frac{\partial^2p(x^1, x^2, t)}{\partial x^1\partial t} - \frac{\partial p(x^1, x^2, t)}{\partial x^2} +
 \frac{\partial p(x^1, x^2, t)}{\partial t} \frac{\partial\nu(x^1, x^2)}{\partial x^1}\Big)\\\nonumber && \Big(\frac{\partial^2p(x^1, x^2, \pi + t)}{\partial t^2}
 + p(x^1, x^2, \pi + t)\Big) - \Big(\frac{\partial^2p(x^1, x^2, \pi + t)}{\partial x^1\partial t}\\\nonumber & - & \frac{\partial p(x^1, x^2, \pi + t)}{\partial x^2} + 
\frac{\partial p(x^1, x^2, \pi + t)}{\partial t} \frac{\partial\nu(x^1, x^2)}{\partial x^1}\Big)\Big(\frac{\partial^2p(x^1, x^2, t)}{\partial t^2}\\\nonumber & + & p(x^1, x^2, t)\Big) + \frac{\partial\nu(x^1, x^2)}{\partial x^2}\Big(\frac{\partial^2p(x^1, x^2, t)}{\partial t^2}p(x^1, x^2, \pi + t)\\\nonumber & - & \frac{\partial^2p(x^1, x^2, \pi + t)}{\partial t^2}p(x^1, x^2, t)\Big)\bigg) 
+ \sin t \bigg(\Big(\frac{\partial^2p(x^1, x^2, t)}{\partial x^2\partial t} + \frac{\partial p(x^1, x^2, t)}{\partial x^1}\\\nonumber & + & \frac{\partial p(x^1, x^2, t)}{\partial t} \frac{\partial\nu(x^1, x^2)}{\partial x^2}\Big)\Big(\frac{\partial^2p(x^1, x^2, \pi + t)}{\partial t^2} + p(x^1, x^2, \pi + t)\Big)\\\nonumber & - & 
 \Big(\frac{\partial^2p(x^1, x^2, \pi + t)}{\partial x^2\partial t} + \frac{\partial p(x^1, x^2, \pi + t)}{\partial x^1} + \frac{\partial p(x^1, x^2, \pi + t)}{\partial t} \frac{\partial\nu(x^1, x^2)}{\partial x^2}\Big)\\\nonumber & & \Big( \frac{\partial^2p(x^1, x^2, t)}{\partial t^2} + p(x^1, x^2, t)\Big) - \frac{\partial\nu(x^1, x^2)}{\partial x^1}\Big(\frac{\partial^2p(x^1, x^2, t)}{\partial t^2}p(x^1, x^2, \pi + t)\\\nonumber & - &
 \frac{\partial^2p(x^1, x^2, \pi + t)}{\partial t^2}p(x^1, x^2, t)\Big)\bigg) = 0. 
\end{eqnarray}

Let us remark that in the natural coordinates $(x^1, x^2, t)$ on $\Sigma_1$ we have
\begin{eqnarray}
\alpha & := & \sqrt{a(y,y)} = 1, \\\nonumber
\beta & := & b_1(x^1, x^2)y^1 + b_2(x^1, x^2)y^2 \\\nonumber
&=& e^{-\nu(x^1, x^2)}\big\lbrack b_1(x^1, x^2)\cdot\cos l + b_2(x^1, x^2)\cdot\sin l\big\rbrack
\end{eqnarray}
where $\nu, b_1, b_2 : M \longrightarrow {\bf R}$ are smooth functions.

Hence, on the hypersurface $\Sigma_1\hookrightarrow TM$, we can put $s = \beta$ and therefore
\begin{eqnarray}\label{intro3}
\nonumber
p(x^1, x^2, t) & = & \phi(s)_{|s=\beta} = \phi(b_1(x^1, x^2)e^{-\nu(x_1, x_2)}\cos t\\\nonumber & + & b_2(x^1, x^2)e^{-\nu(x_1, x_2)}\sin t),\\
p(x^1, x^2, \pi +t) & = & r(x^1, x^2, t) = \phi(-s)_{|s=\beta}\\\nonumber & = & \phi\big(b_1(x^1, x^2)e^{-\nu(x_1, x_2)}\cos(\pi + t)\\\nonumber & + & b_2(x^1, x^2)e^{-\nu(x_1, x_2)}\sin(\pi + t)\big). 
\end{eqnarray}

Straightforward computations give:
\begin{eqnarray}
\frac{\partial\beta}{\partial x^1} & = & e^{-\nu(x^1, x^2)}\bigg\lbrack\Big(\frac{\partial b_1(x^1, x^2)}{\partial x^1}\cos t + \frac{\partial b_2(x^1, x^2)}{\partial x^1}\sin t\Big)\\\nonumber 
& - & \frac{\partial\nu(x^1, x^2)}{\partial x^1}\big(b_1(x^1, x^2)\cos t + b_2(x^1, x^2)\sin t\big)\bigg\rbrack,\\\nonumber
\frac{\partial\beta}{\partial x^2} & = & e^{-\nu(x^1, x^2)}\bigg\lbrack\Big(\frac{\partial b_1(x^1, x^2)}{\partial x^2}\cos t + \frac{\partial b_2(x^1, x^2)}{\partial x^2}\sin t\Big)\\\nonumber 
& - & \frac{\partial\nu(x^1, x^2)}{\partial x^2}\big(b_1(x^1, x^2)\cos t + b_2(x^1, x^2)\sin t\big)\bigg\rbrack,\\\nonumber
\frac{\partial\beta}{\partial t} & = & e^{-\nu(x^1, x^2)}\big(-b_1(x^1, x^2)\sin t + b_2(x^1, x^2)\cos t\big) = \beta'_t ,
\end{eqnarray}
\begin{eqnarray}
\frac{\partial^2\beta}{\partial x^1\partial t} & = & e^{-\nu(x^1, x^2)}\bigg\lbrack\Big(-\frac{\partial b_1(x^1, x^2)}{\partial x^1}\sin t + \frac{\partial b_2(x^1, x^2)}{\partial x^1}\cos t\Big)\\\nonumber & - & \frac{\partial\nu(x^1, x^2)}{\partial x^1}\big(-b_1(x^1, x^2)\sin t + b_2(x^1, x^2)\cos t\big)\bigg\rbrack,\\\nonumber
\frac{\partial^2\beta}{\partial x^2\partial t} & = & e^{-\nu(x^1, x^2)}\bigg\lbrack\Big(-\frac{\partial b_1(x^1, x^2)}{\partial x^2}\sin t + \frac{\partial b_2(x^1, x^2)}{\partial x^2}\cos t\Big)\\\nonumber & - & \frac{\partial\nu(x^1, x^2)}{\partial x^2}\big(-b_1(x^1, x^2)\sin t + b_2(x^1, x^2)\cos t\big)\bigg\rbrack,\\\nonumber
\frac{\partial^2\beta}{\partial t^2} & = & -\beta.
\end{eqnarray}

If we intoduce the notations
\begin{eqnarray}
\mathcal A & := & e^{-\nu(x^1, x^2)}\Big(\frac{\partial b_1(x^1, x^2)}{\partial x^1}\cos t + \frac{\partial b_2(x^1, x^2)}{\partial x^1}\sin t\Big),\\\nonumber
\mathcal B & := & e^{-\nu(x^1, x^2)}\Big(\frac{\partial b_1(x^1, x^2)}{\partial x^2}\cos t + \frac{\partial b_2(x^1, x^2)}{\partial x^2}\sin t\Big),\\\nonumber
\mathcal C & := & e^{-\nu(x^1, x^2)}\Big(-\frac{\partial b_1(x^1, x^2)}{\partial x^1}\sin t + \frac{\partial b_2(x^1, x^2)}{\partial x^1}\cos t\Big),\\\nonumber
\mathcal D & := & e^{-\nu(x^1, x^2)}\Big(-\frac{\partial b_1(x^1, x^2)}{\partial x^2}\sin t + \frac{\partial b_2(x^1, x^2)}{\partial x^2}\cos t\Big),\\\nonumber
\beta'_t & := & \frac{\partial\beta}{\partial t}
\end{eqnarray}
and
\begin{eqnarray}
 a & := & \mathcal A - \frac{\partial \nu(x^1, x^2)}{\partial x^1}\cdot\beta,\\\nonumber
 b & := & \mathcal B - \frac{\partial \nu(x^1, x^2)}{\partial x^2}\cdot\beta,\\\nonumber
 c & := & \mathcal C - \frac{\partial \nu(x^1, x^2)}{\partial x^1}\cdot\beta_t',\\\nonumber
 d & := & \mathcal D - \frac{\partial \nu(x^1, x^2)}{\partial x^2}\cdot\beta_t',\\\nonumber
\end{eqnarray}
we obtain 
\begin{eqnarray*}
\frac{\partial p(x^1, x^2, t)}{\partial x^1} & = & \phi'(s)_{|s=\beta}\Big(\mathcal A - \frac{\partial\nu(x^1, x^2)}{\partial x^1}\beta\Big) = \phi'(s)_{|s=\beta}\cdot a,\\\nonumber
\frac{\partial p(x^1, x^2, t)}{\partial x^2} & = & \phi'(s)_{|s=\beta}\Big(\mathcal B - \frac{\partial\nu(x^1, x^2)}{\partial x^2}\beta\Big) = \phi'(s)_{|s=\beta}\cdot b,\\\nonumber
\frac{\partial p(x^1, x^2, t)}{\partial t} & = & \phi'(s)_{|s=\beta}\cdot\beta'_t,\\\nonumber
\frac{\partial r(x^1, x^2, t)}{\partial x^1} & = & -\phi'(-s)_{|s=\beta}\Big(\mathcal A - \frac{\partial\nu(x^1, x^2)}{\partial x^1}\beta\Big) = -\phi'(-s)_{|s=\beta}\cdot a,\\\nonumber
\frac{\partial r(x^1, x^2, t)}{\partial x^2} & = & -\phi'(-s)_{|s=\beta}\Big(\mathcal B - \frac{\partial\nu(x^1, x^2)}{\partial x^2}\beta\Big) = \phi'(-s)_{|s=\beta}\cdot b,\\\nonumber
\frac{\partial r(x^1, x^2, t)}{\partial t} & = & -\phi'(-s)_{|s=\beta}\cdot\beta'_t\\\nonumber
\frac{\partial^2 p(x^1, x^2, t)}{\partial x^1\partial t} & = & \phi''(s)_{|s=\beta}\cdot(\beta'_t)\Big(\mathcal A - \frac{\partial\nu(x^1, x^2)}{\partial x^1}\beta\Big)\\\nonumber 
& + & \phi'(s)_{|s=\beta}\Big(\mathcal C - \frac{\partial\nu(x^1, x^2)}{\partial x^1}\beta'_t\Big)\\\nonumber & = &  \phi''(s)_{|s=\beta}\cdot(\beta'_t)\cdot a + \phi'(s)_{|s=\beta}\cdot c,\\\nonumber
\frac{\partial^2 p(x^1, x^2, t)}{\partial x^2\partial t} & = & \phi''(s)_{|s=\beta}\cdot(\beta'_t)\Big(\mathcal B - \frac{\partial\nu(x^1, x^2)}{\partial x^2}\beta\Big)\\\nonumber 
& + & \phi'(s)_{|s=\beta}\Big(\mathcal D - \frac{\partial\nu(x^1, x^2)}{\partial x^2}\beta'_t\Big)\\\nonumber & = & \phi''(s)_{|s=\beta}\cdot(\beta'_t)\cdot b + \phi'(s)_{|s=\beta}\cdot d,\\\nonumber
\frac{\partial^2 p(x^1, x^2, t)}{\partial t^2} & = & \phi''(s)_{|s=\beta}\cdot(\beta'_t)^2 - \phi'(s)_{|s=\beta}\cdot\beta,\\\nonumber
\frac{\partial^2 r(x^1, x^2, t)}{\partial x^1\partial t} & = & \phi''(-s)_{|s=\beta}\cdot(\beta'_t)\Big(\mathcal A - \frac{\partial\nu(x^1, x^2)}{\partial x^1}\beta\Big)\\\nonumber 
& - & \phi'(-s)_{|s=\beta}\Big(\mathcal C - \frac{\partial\nu(x^1, x^2)}{\partial x^1}\beta'_t\Big)\\\nonumber & = & \phi''(-s)_{|s=\beta}\cdot(\beta'_t)\cdot a - \phi'(-s)_{|s=\beta}\cdot c,\\\nonumber
\end{eqnarray*}
\begin{eqnarray*}
\frac{\partial^2 r(x^1, x^2, t)}{\partial x^2\partial t} & = & \phi''(-s)_{|s=\beta}\cdot(\beta'_t)\Big(\mathcal B - \frac{\partial\nu(x^1, x^2)}{\partial x^2}\beta\Big)\\\nonumber 
& - & \phi'(-s)_{|s=\beta}\Big(\mathcal D - \frac{\partial\nu(x^1, x^2)}{\partial x^2}\beta'_t\Big)\\\nonumber & = & \phi''(-s)_{|s=\beta}\cdot(\beta'_t)\cdot b - \phi'(-s)_{|s=\beta}\cdot d,\\\nonumber
\frac{\partial^2 r(x^1, x^2, t)}{\partial t^2} & = & \phi''(-s)_{|s=\beta}\cdot(\beta'_t)^2 + \phi'(-s)_{|s=\beta}\cdot\beta.
\end{eqnarray*}
Substituting now all these in $\eqref{ecprinc}$ and arranging convenient the terms, we obtain
\begin{eqnarray}\label{inloc}
\nonumber
& &\Big\lbrack \frac{\partial^2r}{\partial t^2} + r \Big\rbrack\cdot T_1 - \Big\lbrack \frac{\partial^2p}{\partial t^2} + p \Big\rbrack\cdot T_2 + 
\Big(\cos t\frac{\partial\nu}{\partial x^2} - \sin t\frac{\partial\nu}{\partial x^1}\Big)\cdot T_3\\ & + & \Big(\cos t\frac{\partial\nu}{\partial x^1} + 
\sin t\frac{\partial\nu}{\partial x^2}\Big)\cdot T_4 = 0,
\end{eqnarray}
where
\begin{eqnarray*}
T_1 & := & \cos t\Big(\frac{\partial^2p}{\partial x^1\partial t} - \frac{\partial p}{\partial x^2}\Big) + \sin t\Big(\frac{\partial^2p} {\partial x^2\partial t}
 + \frac{\partial p}{\partial x^1}\Big)\\\nonumber & = & \phi''(s)_{|s=\beta}\cdot\beta_t'\cdot\mathcal G + \phi'(s)_{|s=\beta} \cdot\mathcal H,\\\nonumber
T_2 & := & \cos t\Big(\frac{\partial^2r}{\partial x^1\partial t} - \frac{\partial r}{\partial x^2}\Big) + \sin t\Big(\frac{\partial^2r} {\partial x^2\partial t} + 
\frac{\partial r}{\partial x^1}\Big)\\\nonumber & = & \phi''(-s)_{|s=\beta}\cdot\beta_t'\cdot\mathcal G - \phi'(-s)_{|s=\beta} \cdot\mathcal H,\\\nonumber
T_3 & := & \frac{\partial^2p}{\partial t^2}\cdot r - \frac{\partial^2r}{\partial t^2}\cdot p \\\nonumber 
& = & \big(\beta_t'\big)^2\cdot\big\lbrack\phi''(s)_{|s=\beta}\phi(-s)_{|s=\beta} \\\nonumber
&-& \phi''(-s)_{|s=\beta}\phi(s)_{|s=\beta}\big\rbrack - \beta\cdot\big\lbrack \phi'(s)_{|s=\beta}\phi(-s)_{|s=\beta} + \phi'(-s)_{|s=\beta}\phi(s)_{|s=\beta}\big\rbrack,\\\nonumber
T_4 & := & \frac{\partial p}{\partial t}\cdot\Big\lbrack \frac{\partial^2r}{\partial t^2} + r \Big\rbrack  - 
\frac{\partial r}{\partial t}\cdot\Big\lbrack \frac{\partial^2p}{\partial t^2} + p \Big\rbrack \\\nonumber 
& = & \big(\beta_t'\big)^3\cdot\big\lbrack\phi'(s)_{|s=\beta}\phi''(-s)_{|s=\beta} \\\nonumber
&+& \phi'(-s)_{|s=\beta}\phi''(s)_{|s=\beta}\big\rbrack + \beta_t'\cdot\big\lbrack \phi'(s)_{|s=\beta}\phi(-s)_{|s=\beta} + \phi'(-s)_{|s=\beta}\phi(s)_{|s=\beta}\big\rbrack.
\end{eqnarray*}

The final forms from above were determined after some computations. Moreover, we have
\begin{eqnarray*}
T_1 & = & \cos t\big(\phi''(s)_{|s=\beta}\cdot(\beta_t')\cdot a + \phi'(s)_{|s=\beta}\cdot c - \phi'(s)_{|s=\beta}\cdot b\big) + \sin t\big(\phi''(s)_{|s=\beta}\cdot (\beta_t')\cdot b\\\nonumber & + & \phi'(s)_{|s=\beta}\cdot d + \phi'(s)_{|s=\beta}\cdot a\big) = \phi''(s)_{|s=\beta}\cdot(\beta_t')\lbrack a\cdot\cos t + b\cdot\sin t\rbrack\\\nonumber & + & \phi'(s)_{|s=\beta}\lbrack c\cdot\cos t - b\cdot\cos t + d\cdot\sin t + a\cdot\sin t\rbrack = \phi''(s)_{|s=\beta} \cdot\beta_t'\cdot\mathcal G + \phi'(s)_{|s=\beta} \cdot\mathcal H,\\\nonumber
T_2 & = & \cos t\big(\phi''(-s)_{|s=\beta}\cdot(\beta_t')\cdot a - \phi'(-s)_{|s=\beta}\cdot c + \phi'(-s)_{|s=\beta}\cdot b\big) + \sin t\big(\phi''(-s)_{|s=\beta} \cdot(\beta_t')\cdot b\\\nonumber & - & \phi'(s)_{|s=\beta}\cdot d - \phi'(s)_{|s=\beta}\cdot a\big) = \phi''(-s)_{|s=\beta}\cdot(\beta_t')\lbrack a\cdot\cos t + b\cdot\sin t\rbrack\\\nonumber & - & \phi'(-s)_{|s=\beta}\lbrack c\cdot\cos t - b\cdot\cos t + d\cdot\sin t + a\cdot\sin t\rbrack\\\nonumber & = & \phi''(-s)_{|s=\beta} \cdot\beta_t'\cdot\mathcal G - \phi'(-s)_{|s=\beta} \cdot\mathcal H,
\end{eqnarray*}
where
\begin{eqnarray*}
\mathcal G & := & e^{-\nu(x^1, x^2)}\Big(\frac{\partial b_1(x^1, x^2)}{\partial x^1}\cos^2 t + \sin t\cdot\cos t\Big(\frac{\partial b_2(x^1, x^2)}{\partial x^1} + \frac{\partial b_1(x^1, x^2)}{\partial x^2}\Big)\\\nonumber & + & \frac{\partial b_2(x^1, x^2)}{\partial x^2}\sin^2 t\Big) - \beta\Big(\frac{\partial\nu(x^1, x^2)}{\partial x^1}\cos t + \frac{\partial\nu(x^1, x^2)}{\partial x^2}\sin t\Big),\\\nonumber
\mathcal H & := & e^{-\nu(x^1, x^2)}\Big(\frac{\partial b_2(x^1, x^2)}{\partial x^1} - \frac{\partial b_1(x^1, x^2)}{\partial x^2}\Big) - \beta_t'\Big(\frac{\partial\nu(x^1, x^2)}{\partial x^1}\cos t\\\nonumber & + & \frac{\partial\nu(x^1, x^2)}{\partial x^2}\sin t\Big) + \beta\Big(\frac{\partial\nu(x^1, x^2)}{\partial x^2}\cos t - \frac{\partial\nu(x^1, x^2)}{\partial x^1}\sin t\Big).
\end{eqnarray*}

After some computations it follows
\begin{eqnarray*}
\Big\lbrack \frac{\partial^2r}{\partial t^2} + r \Big\rbrack\cdot T_1 - \Big\lbrack \frac{\partial^2p}{\partial t^2} + p \Big\rbrack\cdot T_2 = \beta_t'\cdot\mathcal G\cdot \mathcal E + \mathcal H\cdot \mathcal F.
\end{eqnarray*}

The equation \eqref{inloc} can be written now as 
\begin{eqnarray}\label{inloc 1}
\beta_t'\mathcal G\mathcal E + \mathcal H\mathcal F + \Big(\cos t\frac{\partial\nu}{\partial x^2} - \sin t\frac{\partial\nu}{\partial x^1}\Big)T_3 + \Big(\cos t\frac{\partial\nu}{\partial x^1} + \sin t\frac{\partial\nu}{\partial x^2}\Big)T_4 = 0.
\end{eqnarray}
If we denote
\begin{eqnarray}
\nu_+ := \cos t\cdot\frac{\partial\nu}{\partial x_1} + \sin t\cdot\frac{\partial\nu}{\partial x_2},\\\nonumber
\nu_- := \cos t\cdot\frac{\partial\nu}{\partial x_2} - \sin t\cdot\frac{\partial\nu}{\partial x_1},\\\nonumber
\end{eqnarray}
then, the formulas for $\mathcal G$ and $\mathcal H$ can be also be written as
\begin{eqnarray}
\mathcal G & = & a\cdot\cos t + b\cdot\sin t = \big(\mathcal A\cos t +\mathcal B\sin t\big) - \beta\cdot\nu_+,\\\nonumber
\mathcal H & = & (c - b)\cdot\cos t + (a + d)\cdot\sin t\\\nonumber & = & -\beta'_t\cdot\nu_+ + \beta\cdot\nu_- + \big\lbrack(b - \mathcal B)\cdot\cos t + (\mathcal A + \mathcal D)\cdot\sin t\big\rbrack.
\end{eqnarray}
By substituting these formulas in \eqref{inloc 1}, let us observe that one can group the terms and factorize by $\nu_+$ and $\nu_-$, such that we obtain
\begin{eqnarray*}
\beta'_t\cdot\mathcal E\big(\mathcal A\cdot\cos t & + & \mathcal B\sin t\big) + coeff_+\cdot\nu_+ + \mathcal F\lbrack(b - \mathcal B)\cdot\cos t + (\mathcal A + \mathcal D)\sin t\rbrack
\\\nonumber & + & coeff_-\cdot\nu_- = 0.
\end{eqnarray*}
A straightforward computation shows that 
\begin{eqnarray*}
coeff_+ & := & -\beta'_t\cdot\beta\cdot\mathcal E -\beta'_t\cdot F + T_4 = -\beta'_t\cdot\beta\cdot\mathcal E,\\\nonumber
coeff_- & := & T_3 +\mathcal F\cdot\beta = \big(\beta'_t\big)^2\cdot\mathcal E.
\end{eqnarray*}

\begin{remark}
It is useful to see that for $\beta'_t = e^{-\nu(x^1, x^2)}\big(-b_1(x^1, x^2)\sin t + b_2(x^1, x^2)\cos t\big)$ we have
\begin{eqnarray}
\nonumber
(\beta'_t)^2 & = & e^{-2\nu(x^1, x^2)}\big(b_1^2(x^1, x^2)\sin^2 t -2b_1(x^1, x^2)b_2(x^1, x^2)\sin t\cos t \\\nonumber & + & b_2^2(x^1, x^2)\cos^2 t\big)
= e^{-2\nu(x^1, x^2)}\big(b_1^2(x^1, x^2)(1 - \cos^2 t)\\ & - & 2b_1(x^1, x^2)b_2(x^1, x^2)\sin t\cos t + b_2^2(x^1, x^2)(1 - \sin^2 t)\big) \\\nonumber
& = & e^{-2\nu(x^1, x^2)}\big((b_1^2 + b_2^2) - (b_1\cos t + b_2\sin t)^2\big) = b^2 -\beta^2,
\end{eqnarray}
where, $b^2 = e^{-2\nu(x^1, x^2)}(b_1^2 + b_2^2)$ is the Riemannian length of the vector $(b_1,b_2)$.
\end{remark}

Using these formulas, we have

\begin{theorem}\label{thm:5.1}
The necessary and sufficient condition for the Finsler structures $F(x, y)$ and $\overline F(x, y) = F(x - y)$ to be projectively equivalent is 
\begin{eqnarray}\label{ecrez}
\sqrt{b^2 -s^2}\cdot \mathcal E(s)\cdot\mathcal M + \mathcal F(s)\cdot e^{-\nu(x^1, x^2)}curl_{21}=0,
\end{eqnarray}
where 
\begin{equation}\label{e}
\mathcal E(s) := s\big(\phi'(s)\phi''(-s) + \phi'(-s)\phi''(s)\big) + \big(\phi(-s)\phi'' (s) - \phi(s)\phi''(-s)\big),
\end{equation}
\begin{eqnarray}\label{ef}
\mathcal F(s) := (b^2-s^2)\big(\phi'(s)\phi''(-s) + \phi'(-s)\phi''(s)\big) + \big(\phi(-s)\phi'(s) + \phi(s)\phi'(-s)\big)
\end{eqnarray}
and
\begin{eqnarray}\label{em}
\nonumber
\mathcal M & := & e^{-\nu(x^1, x^2)}\cdot\Big(\frac{\partial b_1(x^1, x^2)}{\partial x^1}\cos^2 t + \sin t\cos t\Big(\frac{\partial b_1(x^1, x^2)}{\partial x^2} + \frac{\partial b_2(x^1, x^2)}{\partial x^1}\Big)\\\nonumber & + & \frac{\partial b_2(x^1, x^2)}{\partial x^2}\sin^2t\Big) + \beta'_t\Big(\frac{\partial\nu(x^1, x^2)}{\partial x^2}\cos t - \frac{\partial\nu(x^1, x^2)}{\partial x^1}\sin t\Big)\\ & - & \beta\Big(\frac{\partial\nu(x^1, x^2)}{\partial x^1}\cos t + \frac{\partial\nu(x^1, x^2)}{\partial x^2}\sin t\Big),\\\nonumber
curl_{21} & := & \frac{\partial b_2(x^1, x^2)}{\partial x^1} - \frac{\partial b_1(x^1, x^2)}{\partial x^2}.
\end{eqnarray}
\end{theorem}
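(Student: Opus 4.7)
The plan is to continue the direct calculation already in progress, using the quantities $T_1,\dots,T_4$, $\mathcal{G}$, $\mathcal{H}$ and the values of $coeff_\pm$ just established, so as to collapse equation \eqref{inloc 1} into the single identity \eqref{ecrez}.

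First, I would verify the two closed-form identities implicit in the displayed expressions for $coeff_\pm$, namely $T_3=(\beta'_t)^2\mathcal{E}-\beta\mathcal{F}$ and $T_4=\beta'_t\mathcal{F}$. Both follow by direct algebraic recombination of the formulas for $T_3,T_4$ in the excerpt once Remark~4.2 is invoked in the form $(\beta'_t)^2=b^2-s^2$: in each case the common factor $\phi'(s)\phi''(-s)+\phi'(-s)\phi''(s)$ can be isolated, and the remaining pieces line up with the definitions \eqref{e} and \eqref{ef} of $\mathcal{E}$ and $\mathcal{F}$.

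Next I would substitute the decompositions
\[
\mathcal{G}=(\mathcal{A}\cos t+\mathcal{B}\sin t)-\beta\,\nu_+,\qquad
\mathcal{H}=e^{-\nu}\,curl_{21}+\beta\,\nu_-\,-\,\beta'_t\,\nu_+,
\]
the first of which is immediate from $\mathcal{G}=a\cos t+b\sin t$ and the definitions of $a,b$, and the second of which follows from $\mathcal{H}=(c-b)\cos t+(a+d)\sin t$ after substituting the definitions of $a,b,c,d$ and observing that the coefficient of $\partial b_2/\partial x^1-\partial b_1/\partial x^2$ collapses via $\cos^2 t+\sin^2 t=1$ to $e^{-\nu}\,curl_{21}$. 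Inserted into \eqref{inloc 1}, these substitutions produce two pairs of immediate cancellations: $\mathcal{F}\beta\nu_-$ against the $-\beta\mathcal{F}\nu_-$ contributed by $\nu_-T_3$, and $-\mathcal{F}\beta'_t\nu_+$ against the $\beta'_t\mathcal{F}\nu_+$ contributed by $\nu_+T_4$. The remainder factors as
\[
\beta'_t\,\mathcal{E}\bigl[(\mathcal{A}\cos t+\mathcal{B}\sin t)-\beta\nu_++\beta'_t\nu_-\bigr] + e^{-\nu}\,\mathcal{F}\,curl_{21} = 0,
\]
and the bracket is, by \eqref{em}, exactly $\mathcal{M}$. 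Applying Remark~4.2 once more to replace $\beta'_t$ by $\sqrt{b^2-s^2}$ yields the stated equation \eqref{ecrez}.

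The main obstacle is pure bookkeeping: one must track a dozen terms involving $\nu_\pm$, $\beta$, $\beta'_t$, $\mathcal{A},\mathcal{B},\mathcal{C},\mathcal{D}$ and products of $\phi,\phi',\phi''$ evaluated at $\pm s$, and the decisive cancellations are only visible after $T_3$ and $T_4$ have been rewritten in terms of $\mathcal{E}$ and $\mathcal{F}$. A minor subtlety is that $\beta'_t$ changes sign as $t$ traverses $[0,2\pi)$ whereas $\sqrt{b^2-s^2}$ does not; since the identity must hold for all $t$ on $\Sigma_1$, both sign choices yield an equivalent geometric condition, and the form \eqref{ecrez} absorbs the sign convention into the parameterization.
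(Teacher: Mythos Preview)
Your argument is correct and follows essentially the same route as the paper: decompose $\mathcal G$ and $\mathcal H$ in terms of $\nu_\pm$, use the identities $T_4=\beta'_t\mathcal F$ and $T_3=(\beta'_t)^2\mathcal E-\beta\mathcal F$ (which the paper encodes as $coeff_+=-\beta'_t\beta\mathcal E$ and $coeff_-=(\beta'_t)^2\mathcal E$), observe the cancellations, and recognise the surviving bracket as $\mathcal M$. Your explicit identification of $(\mathcal C-\mathcal B)\cos t+(\mathcal A+\mathcal D)\sin t$ with $e^{-\nu}curl_{21}$ and your remark on the sign ambiguity in replacing $\beta'_t$ by $\sqrt{b^2-s^2}$ are points the paper leaves implicit, but otherwise the computations coincide.
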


\begin{remark}
Using the above formulas for $\beta$ and $\beta'_t$, one can see that $\mathcal M$ can be expressed as 
\begin{eqnarray}\label{em2}
\mathcal M = \mathcal K_1 + \mathcal K_2\cdot\cos2t + \mathcal K_3\cdot\sin2t,
\end{eqnarray}
where 
\begin{eqnarray*}
\mathcal K_1 & := &\frac{1}{2}\Big(\frac{\partial b_1}{\partial x^1} + \frac{\partial b_2}{\partial x^2}\Big), \\\nonumber
\mathcal K_2 & := & \frac{1}{2}\Big(\frac{\partial b_1}{\partial x^1} - \frac{\partial b_2}{\partial x^2}\Big) - \Big(\frac{\partial\nu}{\partial x^1}b_1 - \frac{\partial \nu}{\partial x^2}b_2\Big), \\\nonumber
\mathcal K_3 & := & \frac{1}{2}\Big(\frac{\partial b_2}{\partial x^1} + \frac{\partial b_1}{\partial x^2}\Big) - \Big(\frac{\partial\nu}{\partial x^2}b_1 + \frac{\partial \nu}{\partial x^1}b_2\Big). \\\nonumber
\end{eqnarray*}
\end{remark}


\section{Basic Lemmas}

In the present section, we are going to give 
some results to be used later.

\begin{lemma} \label{lem:6.1}
The following relations are equivalent 
\begin{enumerate}
\item
$\mathcal E = 0$,
\item
$\phi(s) = k_1\cdot\phi(-s) + k_2\cdot s$, $k_1$, $k_2$ non vanishing constants,
\item $F(\alpha, \beta) = F_0(\alpha, \beta) + \varepsilon\beta,
$
where $F_0$ is an absolute homogeneous $(\alpha, \beta)$-metric and $\varepsilon$ is a non vanishing constant.
\end{enumerate}
\end{lemma}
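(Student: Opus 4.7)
\emph{Strategy.} I would prove the cycle (3) $\Rightarrow$ (2) $\Rightarrow$ (1) $\Rightarrow$ (3), working throughout with the canonical decomposition $\phi=\phi_e+\phi_o$ into even and odd parts.

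\emph{(3) $\Rightarrow$ (2) and (2) $\Rightarrow$ (1).} If $F=F_0+\varepsilon\beta$ with $F_0=\alpha\phi_0(\beta/\alpha)$ absolutely homogeneous, then $\phi_0$ is even and $\phi(s)=\phi_0(s)+\varepsilon s$, so $\phi(s)-\phi(-s)=2\varepsilon s$, giving (2) with $k_1=1$, $k_2=2\varepsilon$. Conversely, starting from $\phi(s)=k_1\phi(-s)+k_2s$, substituting $s\mapsto -s$ and back yields $(1-k_1^2)\phi(s)=(1-k_1)k_2s$, so $k_1\ne 1$ would force $\phi$ either odd (forbidden by Lemma~\ref{le:03}) or linear in $s$ (violating the strong convexity of Lemma~\ref{le:01}). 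Hence $k_1=1$ and $\phi(s)-\phi(-s)=k_2s$. Differentiating twice gives $\phi'(s)+\phi'(-s)=k_2$ and $\phi''(-s)=\phi''(s)$, and substituting these into \eqref{e} collapses it to $k_2 s\phi''(s)-k_2 s\phi''(s)=0$.

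\emph{(1) $\Rightarrow$ (3).} Expanding $\phi'(\pm s)$ and $\phi''(\pm s)$ via the parities of $\phi_e',\phi_o',\phi_e'',\phi_o''$ rewrites $\mathcal E(s)=0$ as
\[
\phi_o''(s)\,A(s)=\phi_e''(s)\,B(s),\qquad A:=\phi_e-s\phi_e',\quad B:=\phi_o-s\phi_o'.
\]
Since $A'=-s\phi_e''$ and $B'=-s\phi_o''$, this is equivalent to the Wronskian identity $A'B-AB'=0$. Applying \eqref{ec:2} at $s$ gives $\phi(s)-s\phi'(s)=A+B>0$, and at $-s$ gives $\phi(-s)+s\phi'(-s)=A-B>0$, so $A>|B|\ge 0$ and in particular $A>0$ on $(-b_0,b_0)$. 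Suppose, for contradiction, that $B(s_0)\ne 0$ for some $s_0>0$, and let $(s_-,s_+)\subset(0,b_0)$ be the connected component of $\{B\ne 0\}$ containing $s_0$. On this interval the Wronskian condition forces $A/B\equiv c$ constant; but $B(s_-)=0$ (either because $s_->0$ is a boundary point of the component or because $s_-=0$ and $B$ is odd), so $A(s_-)=cB(s_-)=0$, contradicting $A>0$. Hence $B\equiv 0$, which integrates to $\phi_o(s)=\varepsilon s$ for a constant $\varepsilon$. Setting $F_0:=\alpha\phi_e(\beta/\alpha)$, we obtain $F=F_0+\varepsilon\beta$ with $F_0$ absolutely homogeneous, which is (3).

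\emph{Main obstacle.} The delicate step is (1) $\Rightarrow$ (3). The Wronskian equation $A'B-AB'=0$ by itself does \emph{not} force $B\equiv 0$: on each connected component of $\{B\ne 0\}$ it only asserts that $A$ and $B$ are proportional, and on disjoint components the constants of proportionality can differ. The indispensable extra input is the strict positivity of the even function $A$, which is a direct consequence of the Finsler strong convexity \eqref{ec:2}; it is this positivity, combined with the forced vanishing of $B$ at the endpoints of its support (where oddness or maximality of the component kicks in), that closes the contradiction and collapses the whole support of $B$ to the empty set.
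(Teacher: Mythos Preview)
Your proof is correct, and the argument for the hard direction $(1)\Rightarrow(3)$ is genuinely different from the paper's. The paper does not prove $(1)\Leftrightarrow(2)$ at all: it simply observes that $\mathcal E=0$ coincides with the equation $\mathcal T-\overline{\mathcal T}=0$ from \cite{MSS} and invokes Lemma~3.4 there. It then proves $(2)\Leftrightarrow(3)$ by the same $k_1=1$ reduction and even/odd decomposition that you use. Your route is self-contained: rewriting $\mathcal E=0$ as the Wronskian relation $A'B-AB'=0$ for $A=\phi_e-s\phi_e'$ and $B=\phi_o-s\phi_o'$, and then exploiting the strict inequality $A>|B|$ (a direct consequence of \eqref{ec:2}) to force $B\equiv0$, is an elegant ODE argument that makes transparent exactly where the Finsler convexity enters. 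The paper's approach has the advantage of recycling machinery valid in all dimensions; yours has the advantage of being elementary and not depending on an external reference. One cosmetic point: your argument yields $\phi_o(s)=\varepsilon s$ with $\varepsilon$ possibly zero, matching the paper's tacit convention that the absolutely homogeneous case is excluded from the outset rather than captured by $\varepsilon=0$.
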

\begin{proof}
The equivalence of 1 and 2 follows directly from lemma 3.4 in \cite{MSS}. Indeed, one can easily see that $\mathcal E = 0$ is equivalent to the equation $\mathcal T - \overline{\mathcal T} = 0$ in \cite{MSS}
and therefore 2 follows.

We prove now the equivalence of 2 and 3. 
First of all, we remark that $k_1$ can take only the value 1. Indeed, by putting $-s$ instead of $s$ in relation 2, it follows
\begin{eqnarray}
\phi(-s) = k_1\cdot\phi(s) - k_2\cdot s
\end{eqnarray}
and by adding these formulas, it results
\begin{eqnarray}
\phi(s) + \phi(-s) = k_1\cdot\big\lbrack\phi(s) + \phi(-s)\big\rbrack,
\end{eqnarray}
i.e.
\begin{eqnarray}
\big\lbrack\phi(s) + \phi(-s)\big\rbrack(k_1 - 1) = 0
\end{eqnarray}
and we have two cases here. The first case is $\phi(s) = -\phi(-s)$, i.e. $\phi$ is an odd function, but this is  not good due to Lemma \ref{le:03}. Therefore, the only 
possible case is $k_1 = 1$ and the formula in 2 actually reads
\begin{eqnarray}\label{ecv2}
\phi(s) = \phi(-s) + k_2\cdot s,
\end{eqnarray}
where $k_2\not= 0$, because otherwise we would obtain only absolute homogeneous metrics. We will show now that \eqref{ecv2} is, in fact, equivalent to the relation 3.

Let us recall that the vector space of all real-valued functions is the direct sum of the subspaces of even and odd functions. In other words, any function $\phi(s)$ can be 
uniquely written as the sum of an even function $\phi_{even}$ and an odd function $\phi_{odd}$, namely
\begin{eqnarray}
\phi(s) = \phi_{even}(s) + \phi_{odd}(s),
\end{eqnarray}
where 
\begin{eqnarray}
\phi_{even}(s) = \frac{1}{2}\big\lbrack\phi(s) + \phi(-s)\big\rbrack,\\\nonumber
\phi_{odd}(s) = \frac{1}{2}\big\lbrack\phi(s) - \phi(-s)\big\rbrack.
\end{eqnarray}
Using now \eqref{ecv2} it follows
\begin{eqnarray}
\phi_{odd}(s) = \frac{1}{2}\big\lbrack\phi(s) - \phi(-s)\big\rbrack = \frac{k_2}{2}\cdot s
\end{eqnarray}
and therefore
\begin{eqnarray}
\phi(s) = \phi_{even}(s) + \frac{k_2}{2}\cdot s,
\end{eqnarray}
i.e. the corresponding $F(\alpha, \beta)$ is of the form 
 in 3.
$\qedd$

\end{proof}

We are going to discuss next the equation $\mathcal F (s)= 0$, where  $\mathcal F (s)$ is given in 
\eqref{ef}.

A straightforward computation shows that, for $\phi'(s) \not= 0$, this is equivalent to
\begin{eqnarray}\label{ef4}
\nonumber
\frac{(b^2 - s^2)\cdot\bar\phi''(s) - s\bar\phi'(s) + \bar\phi(s)}{\bar\phi'(s)} = \frac{(b^2 - s^2)\cdot\phi''(s) - s\phi'(s) + \phi(s)}{\phi'(s)},\\
\end{eqnarray}
where we put $\bar\phi(s) := \phi(-s)$. Since both $\phi$ and $\bar\phi$ must be Finsler metrics, from Lemma \ref{le:01} it results that the numerators in both hand sides of 
\eqref{ef4} must be positive and from here it results $\phi'(s)\cdot\bar\phi'(s) > 0$, in other words, $\phi$ and $\bar\phi$ must have the same monotonicity.

Let us remark that every even function $\phi$ is solution of $\mathcal F = 0$. Of course, any odd function is also solution, but we can exclude these functions due to Lemma \ref{le:03}.

Let us suppose that an arbitrary $\phi$, i.e. it is not even, nor odd, is solution of $\mathcal F = 0$. Then, $\phi(s)$ and $\phi(-s)$ must have the same monotonicity. We will
show that this is not possible.

Indeed, recall that the composition of two functions with same monotonicity gives an increasing function and the composition of two functions with different monotony gives
 an decreasing function (this can be easily be seen from the derivation rule of composed functions).

If we  write $\bar\phi(s) = (\phi\circ \psi)(s)$, where $\psi(s) := -s$, then we have two cases
\begin{enumerate}
\item 
If $\phi$ is an increasing function, then, since, $\psi$ is decreasing, their composition $\bar\phi(s)$ is decreasing, i.e. $\phi(s)$ and $\bar\phi(s)$ have different
monotonicities, but this is contradiction.
\item
If $\phi$ is decreasing, it follows that $\bar\phi(s)$ is increasing, but this also implies that $\phi(s)$ and $\bar\phi(s)$ have different monotonicities and this is 
not good again.
\end{enumerate}
We can conclude that the equation $\mathcal F = 0$ has no Finslerian solution, except the absolute homogeneous Finsler metrics, provided $\phi'(s) \not= 0$ 
for all $s \in (-b_0, b_0).$

Let us consider now the case $\phi'(s) = 0$. 

If $\phi'(s) = 0$ for all $s\in\lbrack - b_0, b_0 \rbrack$, then $\phi$ is linear in $s$ and this is not good because we do not get a 
genuine Finsler metric.

Therefore, the only possible case is that there exists some $s_0\in\lbrack - b_0, b_0 \rbrack$ such that $\phi'(s_0) = 0$, i.e. $s_0$ is a singular point of $\phi$. 
In order to study the metric at the singular point $s_0$, we need to consider the $2^{nd}$ order derivative $\phi''(s_0)$.

Let us assume that $s_0$ is degenerate, i.e. $\phi''(s_0) = 0$. Then, by Taylor's expansion, $\phi$ must be of the form $\phi(s) = a + c\cdot s^3 +$ {\it higher order
terms}, for $s_0 - \varepsilon < s < s_0 + \varepsilon$. Consequently, by neglecting the higher order terms, we obtain $\phi'(s) = 3cs^2$ and $\bar \phi'(s) = -3cs^2$, for
$\varepsilon \rightarrow 0$. Thus we get $\phi'(s)\cdot\bar \phi'(s) < 0$, but this is a contradiction.

Therefore, all singular points $s_0$ must be non-degenerate, i.e. $\phi: \lbrack - b_0, b_0 \rbrack \longrightarrow {\bf R^+}$ is a Morse function. From Morse theory, we know
that the set of singular points of $\phi$ must be finite and contains only isolated points. Then, by means of Morse lemma, it follows that $\phi$ must be of the form $\phi(s) = 
a + bs^2 +$ {\it higher orders terms}, for $s_0 - \varepsilon < s < s_0 + \varepsilon$.

One can easily verify that, for example, 
\begin{equation*}
\phi(s)  =  a + B\cdot s^2,\qquad
\phi(s)  =  c\cdot s^4,
\end{equation*}  
etc, are solutions of $\mathcal F = 0$.\\

In general, one can see that for arbitrary $s$, the function 
\begin{eqnarray*}
\phi(s) = a_0 + a_2\cdot s^2 + a_4\cdot s^4 + ... + a_{2k}\cdot s^{2k}
\end{eqnarray*}  
is a solution for $\mathcal F = 0$, but this is an even function, i.e. $F$ must be absolute homogeneous, and from the previous analysis it follows that there are no other solutions
of the equation $\mathcal F = 0$.

\begin{remark}
We point out that for a singular point $s_0$ of $\phi$, there exists a small enough positive constant $\varepsilon$ such that there is no other singular point in the
$\varepsilon$-neighborhood $(s_0 - \varepsilon, s_0 + \varepsilon)$. Indeed, if the singular points would accumulate, then $\phi$ must be constant on the $\varepsilon$-neighborhood
and it is not good for us because violates the conditions in Lemma \ref{le:01}.
\end{remark}

Therefore, we can conclude
\begin{proposition}\label{prop:5.2}
The equation $\mathcal F = 0$ has no other Finsler solutions except the absolute homogeneous case.
\end{proposition}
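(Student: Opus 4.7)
The plan is to mirror the case analysis sketched in the discussion preceding the proposition and turn it into a structured argument. Write $\bar\phi(s):=\phi(-s)$ and recall from Lemma \ref{le:01} that both $\phi$ and $\bar\phi$ must satisfy the strong convexity condition \eqref{ec:1}, so the numerators in the identity
\begin{equation*}
\frac{(b^2-s^2)\bar\phi''(s) - s\bar\phi'(s) + \bar\phi(s)}{\bar\phi'(s)} \;=\; \frac{(b^2-s^2)\phi''(s) - s\phi'(s) + \phi(s)}{\phi'(s)}
\end{equation*}
(which is algebraically equivalent to $\mathcal F(s)=0$ whenever $\phi'(s)\bar\phi'(s)\neq 0$) are strictly positive. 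First I would treat the generic regime where $\phi'(s)\neq 0$ for all $s\in(-b_0,b_0)$: the positivity of both numerators forces $\phi'(s)\bar\phi'(s)>0$, i.e.\ $\phi$ and $\bar\phi$ must be monotone in the same direction.

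Next I would rule out this same-monotonicity situation by the composition argument: writing $\bar\phi=\phi\circ\psi$ with $\psi(s)=-s$ decreasing, an increasing $\phi$ yields a decreasing $\bar\phi$, and a decreasing $\phi$ yields an increasing $\bar\phi$; in both cases $\phi$ and $\bar\phi$ have opposite monotonicity, contradicting what we just derived. The only way out is that $\phi$ be even (i.e.\ $F$ absolutely homogeneous), since odd $\phi$ was already excluded by Lemma \ref{le:03}. This disposes of the case $\phi'$ nowhere zero.

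Then I would address the case where $\phi'$ vanishes somewhere. If $\phi'\equiv 0$ on $[-b_0,b_0]$, then $\phi$ is constant; combined with the Finsler requirement this does not produce a genuine $(\alpha,\beta)$-metric, so it is excluded. Otherwise the zero set of $\phi'$ is a proper subset, and the main step is to show that every such zero $s_0$ must be non-degenerate. Assume $\phi'(s_0)=\phi''(s_0)=0$; the Taylor expansion near $s_0$ gives $\phi(s)=a+c(s-s_0)^3+\cdots$, so locally $\phi'(s)$ and $\bar\phi'(s)=-\phi'(-s)$ are (to leading order) $3c(s-s_0)^2$ and $-3c(s+s_0)^2$ of opposite sign, violating $\phi'(s)\bar\phi'(s)>0$. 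Hence every zero of $\phi'$ is non-degenerate, so $\phi$ is a Morse function on $[-b_0,b_0]$ and its critical points are isolated (this is the small remark right before the proposition, which prevents accumulation that would force $\phi$ constant on an interval and contradict Lemma \ref{le:01}).

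Finally, between consecutive critical points $\phi'$ does not vanish, so by the previous paragraph $\phi$ is even on each such sub-interval; patching these together by continuity of $\phi$ and its derivatives forces $\phi$ to be globally even on $(-b_0,b_0)$, i.e.\ $F$ is absolutely homogeneous. The step I expect to require the most care is the patching at singular points and justifying that Morse non-degeneracy of $\phi$ combined with the local evenness on each regular sub-interval implies global evenness; everything else is either bookkeeping with the identity for $\mathcal F=0$ or the monotonicity/Taylor arguments already sketched in the text.
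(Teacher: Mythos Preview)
Your proposal follows essentially the same route as the paper's own argument: rewrite $\mathcal F=0$ as the fraction identity, use positivity of the numerators from Lemma~\ref{le:01} to force $\phi'(s)\bar\phi'(s)>0$, dispose of the nowhere-vanishing case via the composition-with-$\psi(s)=-s$ monotonicity contradiction, and handle isolated zeros of $\phi'$ by the Taylor/Morse analysis. Your flagged concern about the ``patching'' step is exactly where the paper's discussion becomes informal (it simply asserts ``from the previous analysis it follows that there are no other solutions''), so you have correctly located the one place where the argument, as written in either version, is not fully closed.

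One caveat worth keeping in mind for that step: the phrase ``$\phi$ is even on each sub-interval between consecutive critical points'' is not quite well-posed, because evenness compares $\phi(s)$ with $\phi(-s)$, and a sub-interval $(a,b)$ need not coincide with its reflection $(-b,-a)$; the monotonicity argument from the generic case genuinely uses the whole symmetric interval. Similarly, in the degenerate-critical-point Taylor argument you expand $\phi$ near $s_0$ but then evaluate $\bar\phi'(s)=-\phi'(-s)$ at $-s$ near $-s_0$, which lies outside that Taylor neighbourhood unless $s_0=0$. The paper's version has the same slippage (it tacitly writes the expansion as if $s_0=0$). These are the points that would need sharpening if you want a fully rigorous proof, but your overall strategy and case decomposition match the paper's.
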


We also have
\begin{lemma}\label{E odd}
The function $\mathcal E(s)$ is an odd function and $\mathcal F(s)$ is an even one. 
\end{lemma}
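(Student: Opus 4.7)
The plan is a direct symmetry check: substitute $-s$ for $s$ in the defining formulas \eqref{e} and \eqref{ef} and read off the parity. No cleverness is needed; the symmetry of the two expressions under $s \mapsto -s$ is essentially built in by construction, since each term pairs $\phi$ with $\bar\phi(s) = \phi(-s)$ and their derivatives. The only care required is keeping track of the signs contributed by the derivative factors and the explicit $s$ or $s^2$ factor in front.

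For $\mathcal E$, I would write
\begin{eqnarray*}
\mathcal E(-s) & = & (-s)\bigl(\phi'(-s)\phi''(s) + \phi'(s)\phi''(-s)\bigr) + \bigl(\phi(s)\phi''(-s) - \phi(-s)\phi''(s)\bigr)\\
& = & -s\bigl(\phi'(s)\phi''(-s) + \phi'(-s)\phi''(s)\bigr) - \bigl(\phi(-s)\phi''(s) - \phi(s)\phi''(-s)\bigr)\\
& = & -\mathcal E(s),
\end{eqnarray*}
so $\mathcal E$ is odd. For $\mathcal F$, since $b^2 - (-s)^2 = b^2 - s^2$ and the two bracketed sums in \eqref{ef} are invariant under swapping $s \leftrightarrow -s$, the same kind of substitution gives $\mathcal F(-s) = \mathcal F(s)$, so $\mathcal F$ is even.

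The only potentially subtle point is the sign rules \eqref{5.17}, namely $[\phi(-s)]' = -\phi'(-s)$, but since $\mathcal E$ and $\mathcal F$ are already written in terms of $\phi'$ and $\phi''$ evaluated at $\pm s$ (rather than in terms of derivatives of the composed map $s \mapsto \phi(-s)$), these sign conventions are not used in this lemma — they are only relevant for checking how $\mathcal E, \mathcal F$ arose in the preceding computations. Thus the proof is essentially two lines of pattern-matching, and there is no real obstacle.
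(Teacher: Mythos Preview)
Your proposal is correct and follows essentially the same approach as the paper: substitute $-s$ for $s$ in the definitions of $\mathcal E$ and $\mathcal F$ and read off the parity directly. Your remark that the chain-rule identities \eqref{5.17} are not actually needed here (since $\mathcal E$ and $\mathcal F$ are already expressed via $\phi',\phi''$ evaluated at $\pm s$ in the sense of \eqref{5.16}) is accurate, even though the paper cites those formulas.
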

\begin{proof}
Indeed, if one puts $-s$ instead of $s$ in the definitions of $\mathcal E(s)$ and $\mathcal F(s)$, then the conclusion follows immediately. Here, we take into account the formulas (\ref{5.16}) and (\ref{5.17}).
$\qedd$
\end{proof}

\section{$(\alpha,\beta)$ -metrics with reversible geodesics} 

Let us consider the necessary and sufficient condition (\ref{ecrez}) given in Theorem 
\ref{thm:5.1} for an $(\alpha,\beta)$-metric to be with reversible 
geodesics. 

If we put $-s$ instead of $s$ and taking into account Lemma \ref{E odd} it follows
\begin{equation}\label{-ecrez}
\begin{split}
& \sqrt{b^2 -s^2}\cdot \mathcal E(-s)\cdot\mathcal M + \mathcal F(-s)\cdot e^{-\nu(x^1, x^2)}curl_{21}=0\\
& -\sqrt{b^2 -s^2}\cdot \mathcal E(s)\cdot\mathcal M + \mathcal F(s)\cdot e^{-\nu(x^1, x^2)}curl_{21}=0,
\end{split}
\end{equation}
and therefore, from relations \eqref{ecrez} and \eqref{-ecrez} it follows
\begin{equation}\label{E-F system}
\begin{cases}
& \mathcal E(s)\cdot\mathcal M=0\\
& \mathcal F(s)\cdot curl_{21}=0.
\end{cases}
\end{equation}

Since, due to Proposition \ref{prop:5.2}, the condition  $\mathcal F(s)=0$ is not convenient, it follows that geodesic reversibility condition \eqref{ecrez} is equivalent to one of the following two cases
\begin{equation}
\mathcal E(s)=0,\quad curl_{21}=0,
\end{equation}
or
\begin{equation}
\mathcal M=0,\quad curl_{21}=0.
\end{equation}

The first case was already discussed in Lemma \ref{lem:6.1}. 

We will discuss next the case $\mathcal M = 0$.

We start assuming $\mathcal M = 0$, for all $t \in \lbrack0, 2\pi)$, i.e. $\mathcal M = \mathcal K_1 + \mathcal K_2\cdot\cos2t + \mathcal K_3\cdot\sin2t = 0$.
Evaluating this formula in $t = 0$, $t = \frac{\pi}{2}$ and $\frac{\pi}{4}$, we get $\mathcal K_1 = \mathcal K_2 = \mathcal K_3=0$, and taking into account the condition $curl_{21}=0$, we obtain 
\begin{eqnarray}\label{sistem}
\begin{cases}
\frac{\partial b_2}{\partial x^1} - \frac{\partial b_1}{\partial x^2} = 0, \\
\frac{\partial b_1}{\partial x^1} + \frac{\partial b_2}{\partial x^2} = 0, \\
\frac{1}{2}\Big(\frac{\partial b_1}{\partial x^1} - \frac{\partial b_2}{\partial x^2}\Big) - \Big(\frac{\partial\nu}{\partial x^1}b_1 - \frac{\partial \nu}{\partial x^2}b_2\Big) = 0, \\
\frac{1}{2}\Big(\frac{\partial b_2}{\partial x^1} + \frac{\partial b_1}{\partial x^2}\Big) - \Big(\frac{\partial\nu}{\partial x^2}b_1 + \frac{\partial \nu}{\partial x^1}b_2\Big)=0. 
\end{cases}
\end{eqnarray}
This is a $1^{st}$ order $PDE$ with 2 unknown functions $b_1$, $b_2$, defined on $M$, where $\nu$ is a given function.

One can easily remark that the first two equations of the system are in fact the divergence and the curl of the vector $(b_1,b_2)$ and these are equivalent to Riemann-Cauchy conditions for differentiability of the function $\mathfrak b:\mathbb C\to \mathbb C$, given by $\mathfrak b(x^1,x^2)=(b_1(x^1,x^2), b_2(x^1,x^2))$. In other words, any differentiable complex function of one complex variable on $M$ satisfies the first two equations of the system (\ref{sistem}). 

By writing these two relations as
\begin{equation}
\frac{\partial b_2}{\partial x^1} = \frac{\partial b_1}{\partial x^2},  \qquad 
\frac{\partial b_2}{\partial x^2} =-\frac{\partial b_1}{\partial x^1} , 
\end{equation}
the remaining two equations read
\begin{equation}
\begin{split}
&\frac{\partial b_1}{\partial x^1} = \frac{\partial\nu}{\partial x^1}b_1 - \frac{\partial \nu}{\partial x^2}b_2 \\
&\frac{\partial b_2}{\partial x^1} = \frac{\partial\nu}{\partial x^1}b_2 + \frac{\partial \nu}{\partial x^2}b_1. 
\end{split}
\end{equation}

A straight forward computation shows that this PDE system is integrable, i.e. 
$$\frac{\partial}{\partial x^2}\Bigl(\frac{\partial b_1}{\partial x^1}\Bigl)-
\frac{\partial}{\partial x^1}\Bigl(\frac{\partial b_1}{\partial x^2}\Bigl)=0,$$
 if and only if 
\begin{equation}\label{laplacian}
\frac{\partial^2\nu}{\partial x^1\partial x^1} +\frac{\partial^2\nu}{\partial x^2\partial x^2} =0,
\end{equation}
provided $b_1$ and $b_2$ do not vanish in the same time. 

We remark that the same conclusion follows from the Cartan-K\"ahler theory applied to the system 
(\ref{sistem}).

On the other hand, we recall that in the isothermal coordinates $x^1$, $x^2$, the Gauss curvature $k$ of the Riemannian metric $e^{2\nu}\delta_{ij}$ is given by
\begin{equation}
k=-e^{-2\nu}\Bigl(\frac{\partial^2\nu}{\partial x^1\partial x^1} +\frac{\partial^2\nu}{\partial x^2\partial x^2} \Bigr).
\end{equation}

Therefore we can conclude that the PDE system (\ref{sistem}) is integrable if and only if the Riemannian metric $a$ is flat. But this means that the function $\nu(x^1,x^2)$ must be constant and thus the system 
(\ref{sistem}) has only the constant solution, i.e. the functions $b_1$, $b_2$ are constant. \\

\begin{remark}
If $(M, a)$ is a flat Riemannian space and $\beta = b_i\cdot y^i$ a linear 1-form on $TM$, such that $b_1$, $b_2$ are constants, then any $(\alpha, \beta)$ metric $F = F(\alpha, \beta)$
constructed with these $\alpha$ and $\beta$ is with reversible geodesics and projectively equivalent $(M, a)$. In fact, $F$ is a Minkowski metric on $M$.

Indeed, one can easily see that if $(M, a)$ is flat and $b_i$ constants, then this implies that the geodesic spray  coefficients of $F(\alpha, \beta)$ are simply (see \cite{MSS}, Prop. 2.1.)
\begin{eqnarray*}
G^i(x, y) = \frac{1}{2}\gamma^i_{00} = \frac{1}{2}\gamma^i_{jk}(x)\cdot y^iy^j\equiv 0,
\end{eqnarray*}
i.e. Finslerian geodesics coincide with Riemannian ones which are straight lines in plane.

We point out that this property is true in arbitrary dimension.
\end{remark}

From the previous discussion, it follows that our analysis lead us to the following two classes of 2-dimensional $(\alpha, \beta)$- Finsler metrics with reversible geodesics

\begin{center}
\begin{tabular}{|c|c|c|c|c|}
\hline
& & \\
  Class                  & $ F$   &    $\alpha, \beta$\\
                    & & \\
\hline
& & \\
$A$  & $F(\alpha, \beta) =  F_0(\alpha, \beta) + \varepsilon\cdot\beta$   &      $\beta$: closed 1-form, $\alpha$: arbitrary\\
& & \\
\hline
& & \\
$B$  &  $   F(\alpha, \beta)$: arbitrary  &     $ b_1, b_2$: constants, $\alpha$: Euclidean flat \\
& & \\
\hline
\end{tabular}
\end{center}
{\bf Table 1.} Classes of Finsler surfaces with $(\alpha,\beta)$-metrics that have reversible geodesics. Here $F_0$ is an absolute homogeneous Finsler metric, and $\varepsilon$ a non vanishing constant. 
\\

Therefore, we may state our main result 
\begin{theorem}
A 2-dimensional Finsler space with $(\alpha, \beta)$-metrics is with reversible geodesics if and only if it belongs to one of the classes described in the table above.
\end{theorem}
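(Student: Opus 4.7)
The plan is to convert the necessary and sufficient condition of Theorem~\ref{thm:5.1} into the stated dichotomy by a careful case split, and then verify that each class in the table really does yield reversible geodesics.

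First I would start from the equivalent pair \eqref{E-F system}, which is already obtained from \eqref{ecrez} by substituting $-s$ for $s$ and invoking the parities of $\mathcal E$ and $\mathcal F$ established in Lemma~\ref{E odd}. Since Proposition~\ref{prop:5.2} rules out $\mathcal F(s)\equiv 0$ as a genuine Finslerian possibility (except in the excluded absolute homogeneous case), the second line of \eqref{E-F system} forces $curl_{21}=0$, i.e.\ $\beta$ is a closed 1-form. The first line then leaves exactly two branches to analyse: $\mathcal E(s)\equiv 0$, or $\mathcal M\equiv 0$ on $\Sigma_1$.

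The $\mathcal E\equiv 0$ branch is handled at once by Lemma~\ref{lem:6.1}, which identifies $F$ as $F_0+\varepsilon\beta$ with $F_0$ absolute homogeneous and $\varepsilon\ne 0$; combined with $curl_{21}=0$ this is precisely Class $A$, the underlying Riemannian $\alpha$ being unrestricted. For the $\mathcal M\equiv 0$ branch I would use the expansion \eqref{em2}, $\mathcal M=\mathcal K_1+\mathcal K_2\cos 2t+\mathcal K_3\sin 2t$, and evaluate at $t=0,\pi/4,\pi/2$ to deduce $\mathcal K_1=\mathcal K_2=\mathcal K_3=0$. Together with $curl_{21}=0$ this gives the first-order PDE system \eqref{sistem} for $(b_1,b_2)$; its first two equations are the Cauchy--Riemann equations, and solving the remaining two for $\partial b_1/\partial x^1$ and $\partial b_2/\partial x^1$ reduces the integrability condition to the vanishing of the Laplacian of $\nu$. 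In isothermal coordinates this is exactly the vanishing of the Gauss curvature of $a$, so $a$ is flat, $\nu$ is constant, and then \eqref{sistem} forces $b_1,b_2$ constant as well, giving Class $B$.

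For the converse, in Class $A$ the metric $F=F_0+\varepsilon\beta$ satisfies $\mathcal E\equiv 0$ by Lemma~\ref{lem:6.1}, while $\beta$ closed gives $curl_{21}=0$, so both summands of \eqref{ecrez} vanish; in Class $B$, constancy of $b_1,b_2$ and of $\nu$ kills every partial derivative appearing in $\mathcal M$ and in $curl_{21}$, so \eqref{ecrez} is again trivially satisfied (as the Remark in Section~6 also confirms via the vanishing of the geodesic spray coefficients). The point I expect to require the most care is the $\mathcal M\equiv 0$ branch: one has to make sure that the integrability computation is not rendered vacuous by the degenerate case $(b_1,b_2)\equiv 0$, which is ensured by excluding the trivial $\beta=0$. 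Once that is clear, the classification table is complete.
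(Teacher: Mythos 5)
Your proposal follows the paper's own argument essentially step for step: the same reduction of \eqref{ecrez} to the system \eqref{E-F system} via Lemma~\ref{E odd}, the same elimination of $\mathcal F=0$ by Proposition~\ref{prop:5.2}, the same two branches handled by Lemma~\ref{lem:6.1} (Class $A$) and by the analysis of $\mathcal M=0$ through the system \eqref{sistem}, its Cauchy--Riemann structure, and the flatness/integrability argument (Class $B$). The only additions are your explicit verification of the converse and the remark about excluding $\beta\equiv 0$, which the paper leaves implicit; otherwise the two proofs coincide.
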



\bigskip

\medskip

\begin{center}
Ioana M. Masca 

\bigskip

'Nicolae Titulescu' College Brasov,\\
B-dul 13 Decembrie Nr. 125, \\
Brasov, Romania

\bigskip
Sorin V. Sabau and H. Shimada

\bigskip

Department of Mathematics\\
Tokai  University\\
Sapporo City, Hokkaido\\ 
005\,--\,8601 Japan

\bigskip

{\small
$\bullet$\,our e-mail addresses\,$\bullet$

\medskip
\textit{e-mail of Masca}\,:

\medskip
{\tt ioana.masca@imaf.ro}

\bigskip 
\textit{e-mail of Sabau} \,:

\medskip
{\tt sorin@tspirit.tokai-u.jp}

\bigskip 
\textit{e-mail of Shimada} \,:

\medskip
{\tt shimadah@tokai-u.jp}

}
\end{center}

\end{document}